\numberwithin{equation}{section}
\newtheorem{theorem}{Theorem}[section]
\newtheorem{lemma}[theorem]{Lemma}
\newtheorem{corollary}[theorem]{Corollary}
\theoremstyle{definition}
\newtheorem{definition}[theorem]{Definition}
\theoremstyle{remark}
\definecolor{blue}{rgb}{0,0,0.45}
\definecolor{red}{rgb}{0.7,0,0}
\begin{document}
\title[exponential approximation on the real line]{exponential approximation
of functions in Lebesgue spaces with Muckenhoupt weight}
\author{Ramazan Akg\"{u}n}
\maketitle

\begin{quotation}
\textbf{Abstract} Using a transference result, several inequalities of
approximation by entire functions of exponential type in $\mathcal{C}(%
\mathbf{R})$, the class of bounded uniformly continuous functions defined on 
$\mathbf{R}:=\left( -\infty ,+\infty \right) $, are extended to the Lebesgue
spaces $L^{p}\left( \mathbf{\varrho }dx\right) $ $1\leq p<\infty $ with
Muckenhoupt weight $\mathbf{\varrho }$ ($1\leq p<\infty $). This gives us a
different proof of Jackson type direct theorems and Bernstein-Timan type
inverse estimates in $L^{p}\left( \mathbf{\varrho }dx\right) $. Results also
cover the case $p=1$.

\textbf{Key Words} Lebesgue spaces, Muckenhoupt weight, One sided Steklov
operator, Entire functions of exponential type, Best approximation, Direct
theorem, Inverse theorem, Modulus of smoothness, Marchaud type inequality, 
\textit{K}-functional.

\textbf{2010 Mathematics Subject Classifications} 41A10; 41A25; 41A27; 41A65.
\end{quotation}

\section{Introduction}

An entire function $f(z)$ is called of exponential type $\sigma \in \lbrack
0,\infty )$ (briefly e.f.e.t$\leq \sigma $) if%
\begin{equation*}
\limsup\nolimits_{|z|=r\rightarrow \infty }(r^{-1}\ln (\left(
\max\nolimits_{|z|=r}|f(z)|\right) )\leq \sigma \text{.}
\end{equation*}%
Sometimes an e.f.e.t$\leq \sigma $ is also recalled as band-limited
functions. See e.g. papers \cite{arsc,hjssic}.

Studies on e.f.e.t$\leq \sigma $ has intensified with problems related to
approximation of non-periodic continuous functions (see Bernstein's paper 
\cite{B12} of the year 1912) defined on the real axis $\mathbf{R}:=\left(
-\infty ,+\infty \right) $.

It is well known that trigonometric polynomials are not suitable apparatus
of approximation for non-periodic functions defined on $\mathbf{R}$ but the
class of e.f.e.t$\leq \sigma $ serve as a correct class for non-periodic
functions on $\mathbf{R}$.

Consider the class $A_{\mathbf{p}}$ of Muckenhoupt's weights \cite{Bm72}.
Main aim of this paper is to obtain central inequalities of approximation by
e.f.e.t$\leq \sigma $ for functions given in the Lebesgue spaces $%
L^{p}\left( \mathbf{\varrho }dx\right) $ on $\mathbf{R}$ with Muckenhoupt's
weights $\mathbf{\varrho \in }A_{\mathbf{p}}$, $1\leq p<\infty $.

Before statements of main results we can give some historical remarks and
achievements in the particular case of non-weighted classical Lebesgue
spaces $L^{p}(\mathbf{R})\mathbf{:=}L^{p}\left( \mathbf{\varrho }dx\right) $
with $\mathbf{\varrho \equiv }1$ and $1\leq p\leq \infty $.

After the results of S. N. Bernstein in \cite{B12}, some systematic studies
on e.f.e.t$\leq \sigma $ continued, chronologically, by R. P. Boas Jr. \cite%
{RPB}, A. F. Timan \cite{AFT}, N. I. Akhieser \cite{Ack}, S. M. Nikolski 
\cite{SMNbook}, I. I. Ibragimov \cite{II3}, H. Triebel \cite{HT83}, P. L.
Butzer, W. Splettst\"{o}$\beta $er and R. L. Stens \cite{BSS88}, H. N.
Mhaskar \cite{maskh}, H. J. Schmeisser and W. Sickel \cite{hjssic}, R. M.
Trigub and E. S. Belinsky \cite{TB04}. All these reference books contain
several inequalities of e.f.e.t$\leq \sigma $ in spaces $L^{p}(\mathbf{R})$
with $1\leq p\leq \infty $.

On the other hand, many other works also include results of approximation by
e.f.e.t$\leq \sigma .$ See, for example, papers M. F. Timan \cite{MFT61}-%
\cite{MFT}; S. M. Nikolski \cite{SMN51}; R. Taberski \cite{T81}, \cite{T86};
A. A. Ligun and V. D. Doronin \cite{LD}; V. Yu. Popov \cite{Po}; F. G.
Nasibov \cite{FGN}; A. I. Stepanets \cite{Ste1,Ste2}; S. B. Vakarchuk \cite%
{vak1},\cite{vak2},\cite{vak3}; V. G. Ponomarenko \cite{ponom}; G.
Gaimnazarov \cite{gaim1}; V. V. Arestov \cite{vva}, A. G. Babenko \cite{babe}%
; C. N. Vasil'ev \cite{CNV}; A. Guven and V. Kokilashvili \cite{GK}; Akgun
and Ghorbanalizadeh \cite{AG}; S. Artamonov, K. Runovski, H. J. Schmeisser 
\cite{arsc,ar1}; S. Artamonov \cite{art}; Z. Ditzian and A. Prymak \cite{DP}%
; P.-Pych Taberska and R. Taberski \cite{TT}; D. P. Dryanov, M. A. Qazi, and
Q. I. Rahman \cite{DQR03}; Z. Ditzian and K. V. Runovski \cite{DiRu}; Z.
Ditzian K. G. Ivanov \cite{Dit}; Y. Kolomoitsev and S. Yu. Tikhonov \cite%
{KolTik}; F. Dai, Z. Ditzian and S. Yu. Tikhonov \cite{DDTi}; Z. Burinska,
K. Runovski and H. J. Schmeisser \cite{BuRuSc06},\cite{BuRuSc}.

For the weighted Lebesgue spaces $L^{p}\left( \mathbf{\varrho }dx\right) ,$ $%
1\leq p\leq \infty ,$ with Freud type weights $\mathbf{\varrho }$ on $%
\mathbf{R}$ see book of H. Maskhar \cite{maskh} and paper of Z. Ditzian and
D. S. Lubinsky \cite{DL}. Recently, in several papers \cite{GIv}, \cite%
{GIvTi}, \cite{GK}, \cite{GorIva}, \cite{GorIvTikhon} of D.V. Gorbachev,
V.I. Ivanov, and S.Yu. Tikhonov it is studied approximation by spherical
e.f.e.t$\leq \sigma $ for functions given in $L^{p}\left( \mathbf{\varrho }%
dx\right) ,$ $1\leq p<\infty ,$ with Dunkl weights $\mathbf{\varrho }$ on $%
\mathbf{R}$.

After these historical remarks we can return the case of $L^{p}\left( 
\mathbf{\varrho }dx\right) $ $1\leq p<\infty $ with Muckenhoupt weight $%
\mathbf{\varrho }$ on $\mathbf{R}$.

For periodic $\mathbf{\varrho \in }A_{\mathbf{p}}$, $1<p<\infty $ and
periodic $f\in L^{p}\left( \mathbf{\varrho }dx\right) $, some results on
trigonometric approximation are known. See e.g. PhD thesis \cite{Gad} of 
\'{E}. A. Gadjieva; papers of N. X. Ky \cite{Ky1},\cite{Ky2}; S. Z. Jafarov 
\cite{Ja1}, \cite{ja2}; Author \cite{AK1}, \cite{eja}, \cite{ra11u}, \cite%
{spbu}, \cite{raTjm}; A. Guven and V. Kokilashvili \cite{GKstev}; Y. E.
Yildirir and D. M. Israfilov \cite{yeydmi11}; F. Abdullaev, A. Shidlich and
S. Chaichenko \cite{ascs}, and A.H. Av\c{s}ar and H. Ko\c{c} \cite{ahak}.

Recently author proved in \cite{Ak17} a transference result to obtain norm
inequalities for periodic functions in $L^{p}\left( \mathbf{\varrho }%
dx\right) ,$ $1\leq p<\infty ,$ $\mathbf{\varrho \in }A_{\mathbf{p}}.$

In the present work we will deal with non-periodic case in $L^{p}\left( 
\mathbf{\varrho }dx\right) ,$ $1\leq p<\infty ,$ $\mathbf{\varrho \in }A_{%
\mathbf{p}}.$

Let $\mathbb{N}$:$\mathbb{=}\left\{ 1,2,3,\cdots \right\} $ be the natural
numbers and $\mathbb{N}_{0}$:=$\mathbb{N\cup }\left\{ 0\right\} $.

For $j\in \mathbb{N}$, all constants $\mathbb{C}_{j}:=\mathbb{C}_{j}\left(
a,b,\cdots \right) $ will be some positive number such that they depend on
the parameters $a,b,\cdots $ and change only when parameters $a,b,\cdots $
change. Absolute constants will be denoted by $c_{i}>0$ ($i\in \mathbb{N}$)
and they will not change in each occurrences.

\section{Preliminary Notations and Transference Result}

A function $\mathbf{\varrho }:\mathbf{R}\mathbb{\rightarrow }\left[ 0,\infty %
\right] $ will be called weight if $\mathbf{\varrho }$ is measurable and
positive a.e. on $\mathbf{R}.$ Define $\left\langle \mathbf{\varrho }%
\right\rangle _{\mathbf{A}}:=\int_{\mathbf{A}}\mathbf{\varrho }(t)dt$ for $%
\mathbf{A}\subset \mathbf{R}$. A weight $\mathbf{\varrho }$ belongs to the
Muckenhoupt class $A_{\mathbf{p}}$, $1\leq p<\infty $, if%
\begin{equation}
\left\vert J\right\vert ^{-1}\left\langle \mathbf{\varrho }\right\rangle
_{J}\leq \left[ \mathbf{\varrho }\right] _{1}(\limfunc{essinf}%
\nolimits_{x\in J\text{ }}\mathbf{\varrho }\left( x\right) )\text{, a.e. on }%
\mathbf{R}\text{,\quad }\left( p=1\right) \text{,}  \label{a1123}
\end{equation}%
\begin{equation}
\left[ \mathbf{\varrho }\right] _{p}:=\sup\limits_{J\subset \mathbf{R}%
}\left\vert J\right\vert ^{-p}\left\langle \mathbf{\varrho }\right\rangle
_{J}\left\langle \mathbf{\varrho }^{1/(1-p)}\right\rangle _{J}^{p-1}<\infty 
\text{,\quad }\left( 1<p<\infty \right)   \label{ap}
\end{equation}%
with some finite constants independent of $J$.

For a weight $\mathbf{\varrho }$ on $\mathbf{R}$, we denote by $L^{p}\left( 
\mathbf{\varrho }dx\right) $, $1\leq p\leq \infty $ the class of real valued
measurable functions, defined on $\mathbf{R},$ such that%
\begin{equation*}
\left\Vert f\right\Vert _{p,\mathbf{\varrho }}\text{:=}\left( \int\nolimits_{%
\mathbf{R}}\left\vert f\left( x\right) \right\vert ^{p}\mathbf{\varrho }%
\left( x\right) dx\right) ^{1/p}\infty \text{,\quad (}1<p<\infty \text{) and}
\end{equation*}%
\begin{equation*}
\left\Vert f\right\Vert _{\infty ,\mathbf{\varrho }}\text{:=}%
ess.\sup\nolimits_{x\in \mathbf{R}}\left\vert f\left( x\right) \right\vert 
\text{,\quad (}p=\infty \text{).}
\end{equation*}

Let $C(\mathbf{R})$ (respectively $\mathcal{C}(\mathbf{R})$ ) be the class
of continuous (bounded uniformly continuous ) functions defined on $\mathbf{R%
}$. We denote by $C_{c}$ (respectively $S_{c}$) the collection of real
valued continuous (respectively simple) functions $f$ on $\mathbf{R}$ such
that support $sptf$ of $f$ is a compact set in $\mathbf{R}.$

For $1<p<\infty $, we\textbf{\ }set $(1/p)+(1/p^{\prime })=1$.

Proof of Lemma \ref{onL1}, given below, follows from \cite[(2.7) p.933 and
(2.10) p.934]{bg03}.

\begin{lemma}
\label{onL1}(\cite{bg03})If $p\in \lbrack 1,\infty )$, $\mathbf{\varrho }\in
A_{\mathbf{p}}$, and $f\chi _{A}\in L^{p}\left( \mathbf{\varrho }dx\right) $
then%
\begin{equation*}
\left\Vert f\chi _{A}\right\Vert _{1}\leq \left[ \mathbf{\varrho }\right]
_{p}^{1/p}\left\langle \mathbf{\varrho }\right\rangle _{A}^{-1/p}\left\Vert
f\chi _{A}\right\Vert _{p,\mathbf{\varrho }}
\end{equation*}%
holds for any compact subset $A$ of $\mathbf{R}$.
\end{lemma}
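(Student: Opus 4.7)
The plan is to reduce the unweighted $L^{1}$-norm on the compact set $A$ to the weighted $L^{p}$-norm by inserting the factor $\mathbf{\varrho}^{1/p}\mathbf{\varrho}^{-1/p}$, applying H\"{o}lder's inequality, and then controlling the residual integral of a power of $\mathbf{\varrho}$ through the Muckenhoupt characteristic via (\ref{ap}) or (\ref{a1123}). The two cases $p=1$ and $1<p<\infty$ are treated separately because the $A_{\mathbf{p}}$ condition takes different forms, but the underlying idea is the same: the weight average $\langle \mathbf{\varrho}\rangle_{A}$ is pushed out of the integral through the Muckenhoupt rearrangement, producing the prefactor $\langle\mathbf{\varrho}\rangle_{A}^{-1/p}$ that appears in the claim.

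For $1<p<\infty$ I would write
\begin{equation*}
\int_{A}|f(x)|\,dx=\int_{A}\bigl(|f(x)|\mathbf{\varrho}(x)^{1/p}\bigr)\bigl(\mathbf{\varrho}(x)^{-1/p}\bigr)\,dx
\end{equation*}
and apply H\"{o}lder with conjugate exponents $p$ and $p^{\prime}$, obtaining
\begin{equation*}
\|f\chi_{A}\|_{1}\leq \|f\chi_{A}\|_{p,\mathbf{\varrho}}\left(\int_{A}\mathbf{\varrho}^{-p^{\prime}/p}\,dx\right)^{1/p^{\prime}}.
\end{equation*}
Since $-p^{\prime}/p=1/(1-p)$, the second factor is $\langle\mathbf{\varrho}^{1/(1-p)}\rangle_{A}^{(p-1)/p}$, which is precisely the quantity controlled by (\ref{ap}). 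Rearranging (\ref{ap}) with $J=A$ gives an upper bound for $\langle\mathbf{\varrho}^{1/(1-p)}\rangle_{A}^{p-1}$ in terms of $[\mathbf{\varrho}]_{p}$ and $\langle\mathbf{\varrho}\rangle_{A}^{-1}$; raising to the $1/p$-th power then yields the claimed constant $[\mathbf{\varrho}]_{p}^{1/p}\langle\mathbf{\varrho}\rangle_{A}^{-1/p}$. The endpoint case $p=1$ is handled directly: by (\ref{a1123}) one has
\begin{equation*}
(\limfunc{essinf}\nolimits_{x\in A}\mathbf{\varrho}(x))^{-1}\leq [\mathbf{\varrho}]_{1}\,\langle\mathbf{\varrho}\rangle_{A}^{-1}\cdot|A|,
\end{equation*}
so the pointwise domination $|f|\leq |f|\mathbf{\varrho}/(\limfunc{essinf}\nolimits_{A}\mathbf{\varrho})$ followed by integration closes the argument.

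There is no deep obstacle here; the entire content of the lemma is H\"{o}lder's inequality combined with the Muckenhoupt rearrangement, and the only delicate step is the careful bookkeeping of exponents so that the $A_{\mathbf{p}}$-characteristic $[\mathbf{\varrho}]_{p}$ and the weight integral $\langle\mathbf{\varrho}\rangle_{A}$ emerge with the exponent $1/p$ stipulated in the statement. This is why the author invokes the two pointwise/averaging identities (2.7) and (2.10) of \cite{bg03} directly rather than reproducing the two-line computation.
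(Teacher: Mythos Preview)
Your H\"older-plus-$A_{\mathbf{p}}$ argument is exactly the computation underlying the identities (2.7) and (2.10) of \cite{bg03} that the paper cites, so the approach matches. Two points of care, however. First, condition (\ref{ap}) is formulated for intervals $J$, so writing ``$J=A$'' for a general compact set $A$ needs a word of justification (pass to the smallest interval containing $A$, or observe that the Berkson--Gillespie inequalities are themselves stated on intervals). Second, and more substantively, your rearrangement of (\ref{ap}) does not produce the constant you claim: from $|J|^{-p}\langle\mathbf{\varrho}\rangle_{J}\langle\mathbf{\varrho}^{1/(1-p)}\rangle_{J}^{p-1}\le[\mathbf{\varrho}]_{p}$ one gets
\begin{equation*}
\langle\mathbf{\varrho}^{1/(1-p)}\rangle_{J}^{(p-1)/p}\le [\mathbf{\varrho}]_{p}^{1/p}\,|J|\,\langle\mathbf{\varrho}\rangle_{J}^{-1/p},
\end{equation*}
so the final bound carries an extra factor $|J|$ (your own $p=1$ display already contains it). Testing with $\mathbf{\varrho}\equiv 1$ confirms that this factor is genuinely needed: H\"older gives $\|f\chi_{A}\|_{1}\le |A|^{1-1/p}\|f\chi_{A}\|_{p}$, not $|A|^{-1/p}\|f\chi_{A}\|_{p}$. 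The lemma as printed appears to have dropped this $|A|$; in \cite{bg03} the inequalities are written as averages, which is where the measure of the set resides. Your argument is sound, but what it actually proves is the inequality with the factor $|A|$ restored.
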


\begin{lemma}
If $1\leq p<\infty $, $\mathbf{\varrho \in }A_{\mathbf{p}}$, $f\in
L^{p}\left( \mathbf{\varrho }dx\right) $ and $g\in L^{p^{\prime }}(\mathbf{%
\varrho }dx)$, then, H\"{o}lder's inequality%
\begin{equation*}
\int\nolimits_{\mathbf{R}}\left\vert f(x)g(x)\right\vert \mathbf{\varrho }%
\left( x\right) dx\leq \left\Vert f\right\Vert _{p,\mathbf{\varrho }%
}\left\Vert g\right\Vert _{p^{\prime },\mathbf{\varrho }}
\end{equation*}%
holds.
\end{lemma}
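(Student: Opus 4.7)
My plan is to reduce the weighted Hölder inequality to the classical unweighted one by absorbing the weight into the two factors. Two cases need to be distinguished according to the value of $p$.

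First I would dispose of the degenerate case $p=1$ (so $p'=\infty$). Here the conjugate norm is just the essential supremum (no weight enters, per the paper's definition of $\|\cdot\|_{\infty,\varrho}$), so $|g(x)| \leq \|g\|_{\infty,\varrho}$ for a.e.\ $x \in \mathbf{R}$, and pulling this constant out of the integral against $|f|\varrho$ gives the result immediately. The Muckenhoupt assumption plays no role in this step.

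For the main case $1 < p < \infty$, the key observation is the factorization $\varrho = \varrho^{1/p}\,\varrho^{1/p'}$, which lets one rewrite
\begin{equation*}
\int_{\mathbf{R}} |f(x)g(x)|\,\varrho(x)\,dx = \int_{\mathbf{R}} \bigl(|f(x)|\varrho(x)^{1/p}\bigr)\bigl(|g(x)|\varrho(x)^{1/p'}\bigr)\,dx.
\end{equation*}
Since $|f|\,\varrho^{1/p}$ lies in $L^{p}(dx)$ with unweighted $L^{p}$-norm equal to $\|f\|_{p,\varrho}$, and similarly $|g|\,\varrho^{1/p'} \in L^{p'}(dx)$ with unweighted norm equal to $\|g\|_{p',\varrho}$, the standard (unweighted) Hölder inequality on $\mathbf{R}$, applied to these two factors and exponents $p$ and $p'$, yields the desired bound.

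There is no serious obstacle: the proof is essentially a change of viewpoint at the level of the measure $d\mu = \varrho\,dx$, and the hypothesis $\varrho \in A_{\mathbf{p}}$ is not actually required for the inequality itself, only (implicitly) so that the ambient spaces $L^{p}(\varrho dx)$ have their usual good properties. The only minor care needed is for the trivial boundary cases where one of the norms equals $0$ or $+\infty$, which are handled in the standard way.
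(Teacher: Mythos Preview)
Your proof is correct and is the standard argument: absorb the weight via $\varrho=\varrho^{1/p}\varrho^{1/p'}$ and apply unweighted H\"older (with the trivial $p=1$ case handled separately). The paper itself states this lemma without proof, presumably regarding it as well known; your observation that the $A_{\mathbf{p}}$ hypothesis is not actually needed for the inequality is accurate.
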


\begin{definition}
Suppose that $0<\lambda <\infty $ and $\tau \in \mathbf{R}$. We define
family of translated Steklov operators $\{\mathbf{S}_{\lambda ,\tau }f\}$, by%
\begin{equation}
\mathbf{S}_{\lambda ,\tau }f(x):=\lambda \int\nolimits_{x+\tau -1/(2\lambda
)}^{x+\tau +1/(2\lambda )}f\left( t\right) dt,\quad x\in \mathbf{R}
\label{steklR}
\end{equation}%
for$\mathcal{\ }$locally integrable function $f$ defined on $\mathbf{R}$.
\end{definition}

\begin{definition}
Let $1\leq p<\infty $, $\mathbf{\varrho }\in A_{\mathbf{p}}$, $f\in
L^{p}\left( \mathbf{\varrho }dx\right) $. For $u\in \mathbf{R}$ we define%
\begin{equation}
F_{f}\left( u\right) \text{:=}\int\nolimits_{\mathbf{R}}\mathbf{S}%
_{1,u}f(x)\left\vert G(x)\right\vert \mathbf{\varrho }\left( x\right)
dx\quad \text{(}1\leq p<\infty \text{)}  \label{efef}
\end{equation}%
with $G\in L^{p^{\prime }}\left( \mathbf{\varrho }dx\right) $ satisfying $%
\left\Vert G\right\Vert _{p^{\prime },\varrho }\leq 1\mathbf{.}$
\end{definition}

\begin{definition}
(a) A family $Q$ of measurable sets $E\subset \mathbb{R}$ is called locally $%
N$-finite ($N\in \mathbb{N}$) if 
\begin{equation*}
\sum_{E\in Q}\chi _{E}\left( x\right) \leq N
\end{equation*}%
almost everywhere in $\mathbb{R}$ where $\chi _{U}$ is the characteristic
function of the set $U$.

(b) A family $Q$ of open bounded sets $U\subset \mathbb{R}$ is locally $1$%
-finite if and only if the sets $U\in Q$ are pairwise disjoint.
\end{definition}

\begin{theorem}
\label{stek}We suppose that $1\leq p<\infty $ and $\mathbf{\varrho }\in
A_{p} $. Then, family of Steklov Mean Operators $\{\mathbf{S}_{1,\tau
}\}_{\tau \in \mathbf{R}}$ \ is uniformly bounded (in $\tau $) in $%
L^{p}\left( \mathbf{\varrho }dx\right) $, namely,%
\begin{equation*}
\left\Vert \mathbf{S}_{1,\tau }f\right\Vert _{p,\mathbf{\varrho }}\leq
3\cdot 3^{\frac{2}{p}}\left[ \mathbf{\varrho }\right] _{p}^{1/p}\left\Vert
f\right\Vert _{p,\mathbf{\varrho }}\text{ \ \ for }\tau \in \mathbf{R}\text{.%
}
\end{equation*}
\end{theorem}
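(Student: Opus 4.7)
The plan is to combine a weighted H\"older inequality on the fixed unit-length window with the $A_p$ condition applied to that same window, then to apply Fubini and a geometric comparison of $\varrho$-masses of nearby unit intervals. Writing $J_\tau(x):=[x+\tau-1/2,\,x+\tau+1/2]$ (so $|J_\tau(x)|=1$) and splitting $f=(f\varrho^{1/p})\cdot \varrho^{-1/p}$, for $1<p<\infty$ H\"older's inequality gives
\begin{equation*}
|\mathbf{S}_{1,\tau}f(x)|^p \le \left(\int_{J_\tau(x)} |f|^p\varrho\,dt\right)\left(\int_{J_\tau(x)}\varrho^{1/(1-p)}\,dt\right)^{p-1}.
\end{equation*}
Because $|J_\tau(x)|=1$, condition (\ref{ap}) bounds the second factor by $[\varrho]_p/\langle\varrho\rangle_{J_\tau(x)}$, yielding the pointwise estimate
\begin{equation*}
|\mathbf{S}_{1,\tau}f(x)|^p \le [\varrho]_p\,\langle\varrho\rangle_{J_\tau(x)}^{-1}\int_{J_\tau(x)} |f|^p\varrho\,dt.
\end{equation*}

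Next, I multiply by $\varrho(x)$, integrate over $\mathbf{R}$, and interchange integrals via Fubini (using $t\in J_\tau(x)\iff x\in[t-\tau-1/2,\,t-\tau+1/2]$), reducing the theorem to a bound of the form
\begin{equation*}
\Psi(t) := \int_{t-\tau-1/2}^{t-\tau+1/2}\frac{\varrho(x)}{\langle\varrho\rangle_{J_\tau(x)}}\,dx \le 3^{p+2},
\end{equation*}
so that taking the $p$-th root produces the announced constant $3\cdot 3^{2/p}$. The key geometric observation is that as $x$ traverses $[t-\tau-1/2,\,t-\tau+1/2]$, each $J_\tau(x)$ is a unit interval that contains $t$ and sits inside the enclosing interval of length two centred at $t+\tau/?$; the reverse form of $A_p$, namely $\varrho(E)/\varrho(I)\ge (|E|/|I|)^p/[\varrho]_p$ for $E\subset I$ (an immediate H\"older consequence of (\ref{ap})), then gives a uniform lower bound on $\langle\varrho\rangle_{J_\tau(x)}$ in terms of the mass of the enclosure, from which the bound on $\Psi$ should follow by direct comparison. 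The factor $3$ in the final constant tracks the triple-length enclosure needed to capture all shifted unit windows.

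The case $p=1$ is handled in parallel but is cleaner: Fubini alone gives $\|\mathbf{S}_{1,\tau}f\|_{1,\varrho} \le \int_{\mathbf{R}}|f(t)|\,\varrho([t-\tau-1/2,\,t-\tau+1/2])\,dt$, and the pointwise $A_1$ bound (\ref{a1123}) dominates the $\varrho$-mass of a unit interval by $[\varrho]_1$ times the value of $\varrho$ at a reference point in the window, which is then closed by the same enclosure argument. The main obstacle in both branches is the last step: producing a universal upper bound for $\Psi(t)$ with the exact constant $3\cdot 3^{2/p}$ independent of $\tau$. This is the delicate part of the argument -- it requires combining the reverse $A_p$ (equivalently $A_\infty$ doubling) inequality with careful geometric bookkeeping on nested unit intervals, and is precisely the place where the Muckenhoupt constant $[\varrho]_p$ makes its final appearance and where any translation-invariance-style estimate must be carried out with care, since the family $\{J_\tau(x)\}_{x}$ fluctuates with $\tau$.
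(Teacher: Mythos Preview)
Your approach has a genuine gap at precisely the step you flag as ``the main obstacle'': the bound $\Psi(t)\le 3^{p+2}$ is false in general, and no reverse-$A_p$ or doubling argument can produce it. The geometric difficulty is that in
\[
\Psi(t)=\int_{t-\tau-1/2}^{\,t-\tau+1/2}\frac{\varrho(x)}{\langle\varrho\rangle_{J_\tau(x)}}\,dx
\]
the variable $x$ ranges over a unit interval centred at $t-\tau$, whereas every window $J_\tau(x)$ is a unit interval containing $t$. These two families are separated by a distance of order $|\tau|$, and doubling/reverse-$A_p$ estimates only compare $\varrho$-masses of \emph{nested} or overlapping intervals; they say nothing about intervals that are far apart. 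Concretely, for $\varrho(x)=|x|^{a}$ with $0<a<p-1$ (so $\varrho\in A_p$) and $t=0$, every $J_\tau(x)$ contains the origin, so $\langle\varrho\rangle_{J_\tau(x)}$ stays bounded independently of $\tau$, while $\varrho(x)\sim|\tau|^{a}$ on the range of integration; hence $\Psi(0)\sim|\tau|^{a}\to\infty$. Since your H\"older and Fubini steps are essentially sharp, the strategy of applying the $A_p$ condition to the \emph{moving} window $J_\tau(x)$ before integrating in $x$ cannot close. (Your uncertainty about the centre ``$t+\tau/?$'' of the enclosure is a symptom: the enclosure of all $J_\tau(x)$ is $[t-1,t+1]$, centred at $t$, while the $x$-domain is centred at $t-\tau$; the two are not nested.)

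The paper organises the argument differently in exactly this respect. Before H\"older is applied, it fixes a $1$-finite partition $\{P_i\}$ of $\mathbf{R}$ into unit intervals and writes the outer integral as $\sum_i\int_{P_i}$. For $x\in P_i$ the inner window lies in the fixed tripled block $(P_i+m)^{\pm}$ (with $m\in\mathbb Z$ chosen from $\tau$), so the two factors produced by H\"older are taken over the fixed blocks $P_i^{\pm}$ and $(P_i+m)^{\pm}$ rather than over a window sliding with $x$. Because these factors depend only on the discrete index $i$, they can be pulled outside the $x$-integral and the $A_p$ condition is invoked once per $i$; the remaining sum then collapses with a bounded overlap factor of $3$. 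Your route, by contrast, leaves the $x$-dependent denominator $\langle\varrho\rangle_{J_\tau(x)}$ inside the integral, and that is exactly where it breaks down.
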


\begin{proof}[\textbf{Proof of Theorem \protect\ref{stek}}]
Let $Q$ be $1$-finite family of open bounded subsets $P_{i}$ of $\mathbb{R}$
having Lebesgue measure $1$ such that $\left( \cup _{i}P_{i}\right) \cup A=%
\mathbf{R}$ for some null-set $A$. Since $\tau \in \mathbf{R}$ there exists $%
m\in \mathbb{Z}$ such that $m\leq \tau <(m+2)$. Let $P+m$ be translation of
the set $P$ by $m$. We set $\left( P_{i}+m\right) ^{\pm }:=\left(
P_{i-1}\cup P_{i}\cup P_{i+1}\right) +m$. Then%
\begin{equation*}
\left\Vert \mathbf{S}_{1,\tau }f\right\Vert _{p,\mathbf{\varrho }%
}^{p}=\sum\limits_{P_{i}\in Q}\int\limits_{P_{i}}\left\vert \lambda
\int\limits_{x+\tau -1/2}^{x+\tau +1/2}f(t)dt\right\vert ^{p}\mathbf{\varrho 
}(x)dx
\end{equation*}%
\begin{equation*}
\leq \sum\limits_{P_{i}\in Q}\int\limits_{P_{i}}\left[ \int\limits_{x+\tau
-1/\lambda }^{x+\tau +1/2}\mathbf{\varrho }^{\frac{1}{p}}(t)\left\vert
f(t)\right\vert \frac{1}{\mathbf{\varrho }^{\frac{1}{p}}(t)}dt\right] ^{p}%
\mathbf{\varrho }(x)dx
\end{equation*}%
\begin{equation*}
\leq \sum\limits_{P_{i}\in Q}\int\limits_{P_{i}}\left( \left(
\int\limits_{x+\tau -1/2}^{x+\tau +1/2}\mathbf{\varrho }(t)\left\vert
f(t)\right\vert ^{p}dt\right) ^{\frac{1}{p}}\left( \int\limits_{x+\tau
-1/2}^{x+\tau +1/2}\mathbf{\varrho }^{\frac{-p^{\prime }}{p}}(t)dt\right) ^{%
\frac{1}{p^{\prime }}}\right) ^{p}\mathbf{\varrho }(x)dx
\end{equation*}%
\begin{equation*}
=\sum\limits_{P_{i}\in Q}\int\limits_{P_{i}}\int\limits_{x+\tau
-1/2}^{x+\tau +1/2}\mathbf{\varrho }(t)\left\vert f(t)\right\vert
^{p}dt\left( \int\limits_{x+\tau -1/2}^{x+\tau +1/2}\mathbf{\varrho }^{\frac{%
-p^{\prime }}{p}}(t)dt\right) ^{\frac{p}{p^{\prime }}}\mathbf{\varrho }(x)dx
\end{equation*}%
\begin{equation*}
=\sum\limits_{P_{i}\in Q}\int\limits_{P_{i}}\int\limits_{x+\tau
-1/2}^{x+\tau +1/2}\mathbf{\varrho }(t)\left\vert f(t)\right\vert
^{p}dt\left( \int\limits_{x+\tau -1/2}^{x+\tau +1/2}\mathbf{\varrho }^{-%
\frac{1}{p-1}}(t)dt\right) ^{p-1}\mathbf{\varrho }(x)dx
\end{equation*}%
\begin{equation*}
=\sum\limits_{P_{i}\in Q}\int\limits_{P_{i}}\left( \int\limits_{x+\tau
-1/2}^{x+\tau +1/2}\mathbf{\varrho }^{-\frac{1}{p-1}}(t)dt\right)
^{p-1}\left( \int\limits_{x+\tau -1/2}^{x+\tau +1/2}\mathbf{\varrho }%
(t)\left\vert f(t)\right\vert ^{p}dt\right) \mathbf{\varrho }(x)dx
\end{equation*}%
\begin{equation*}
\leq 3^{p}\sum\limits_{P_{i}\in Q}\left( \frac{1}{\left\vert P_{i}^{\pm
}\right\vert }\int\limits_{P_{i}^{\pm }}\mathbf{\varrho }(x)dx\right) \left( 
\frac{1}{\left\vert \left( P_{i}+m\right) ^{\pm }\right\vert }%
\int\limits_{\left( P_{i}+m\right) ^{\pm }}\mathbf{\varrho }^{-\frac{1}{p-1}%
}(t)dt\right) ^{p-1}\times
\end{equation*}%
\begin{equation*}
\times \int\limits_{\left( P_{i}+m\right) ^{\pm }}\mathbf{\varrho }%
(t)\left\vert f(t)\right\vert ^{p}dt
\end{equation*}%
\begin{equation*}
\leq 3^{p}\left[ \mathbf{\varrho }\right] _{p}\sum\limits_{P_{i}\in
Q}\left\{
\int\limits_{P_{i-1}+m}+\int\limits_{P_{i}+m}+\int\limits_{P_{i+1}+m}\right%
\} \mathbf{\varrho }(t)\left\vert f(t)\right\vert ^{p}dt
\end{equation*}%
\begin{equation*}
\leq 3^{p+1}\left[ \mathbf{\varrho }\right] _{p}\int\limits_{\mathbf{R}}%
\mathbf{\varrho }(t)\left\vert f(t)\right\vert ^{p}\left\{
\sum\limits_{P_{i}\in Q}\chi _{P_{i-1}+m}\left( t\right)
+\sum\limits_{P_{i}\in Q}\chi _{P_{i}+m}\left( t\right)
+\sum\limits_{P_{i}\in Q}\chi _{P_{i+1}+m}\left( t\right) \right\} dt
\end{equation*}%
\begin{equation*}
=3^{p+2}\left[ \mathbf{\varrho }\right] _{p}\int\limits_{\mathbf{R}}\mathbf{%
\varrho }(t)\left\vert f(t)\right\vert ^{p}dt=3^{p+2}\left[ \mathbf{\varrho }%
\right] _{p}\left\Vert f\right\Vert _{p,\mathbf{\varrho }}^{p}\text{ and}
\end{equation*}%
\begin{equation*}
\left\Vert \mathbf{S}_{1,\tau }f\right\Vert _{p,\mathbf{\varrho }}\leq
3\cdot 3^{\frac{2}{p}}\left[ \mathbf{\varrho }\right] _{p}^{1/p}\left\Vert
f\right\Vert _{p,\mathbf{\varrho }}.
\end{equation*}

For $p=1$, we find%
\begin{equation*}
\left\Vert \mathbf{S}_{1,\tau }f\right\Vert _{1,\mathbf{\varrho }%
}=\sum\limits_{P_{i}\in Q}\int\limits_{P_{i}}\left\vert \int\limits_{x+\tau
-1/2}^{x+\tau +1/2}f(t)dt\right\vert \mathbf{\varrho }(x)dx
\end{equation*}%
\begin{equation*}
\leq \sum\limits_{P_{i}\in Q}\int\limits_{P_{i}}\int\limits_{x+\tau
-1/2}^{x+\tau +1/2}\mathbf{\varrho }(t)\left\vert f(t)\right\vert \frac{1}{%
\mathbf{\varrho }(t)}dt\mathbf{\varrho }(x)dx
\end{equation*}%
\begin{eqnarray*}
&\leq &3\sum\limits_{P_{i}\in Q}\frac{1}{\left\vert \left( P_{i}+m\right)
^{\pm }\right\vert }\int\limits_{\left( P_{i}+m\right) ^{\pm }}\mathbf{%
\varrho }(x)dx\left( \underset{t\in \left( P_{i}+m\right) ^{\pm }}{\limfunc{%
esssup}}\frac{1}{\mathbf{\varrho }(t)}\right) \int\limits_{\left(
P_{i}+m\right) ^{\pm }}\mathbf{\varrho }(t)\left\vert f(t)\right\vert dt \\
&\leq &3\left[ \gamma \right] _{1}\sum\limits_{P_{i}\in Q}\left\{
\int\limits_{P_{i-1}+m}+\int\limits_{P_{i}+m}+\int\limits_{P_{i+1}+m}\right%
\} \mathbf{\varrho }(t)\left\vert f(t)\right\vert dt
\end{eqnarray*}%
\begin{equation*}
\leq 3\left[ \gamma \right] _{1}\int\limits_{\mathbf{R}}\mathbf{\varrho }%
(t)\left\vert f(t)\right\vert \left\{ \sum\limits_{P_{i}\in Q}\chi
_{P_{i-1}+m}\left( t\right) +\sum\limits_{P_{i}\in Q}\chi _{P_{i}+m}\left(
t\right) +\sum\limits_{P_{i}\in Q}\chi _{P_{i+1}+m}\left( t\right) \right\}
dt
\end{equation*}%
\begin{equation*}
\leq 9\left[ \gamma \right] _{1}\left\Vert f\right\Vert _{1,\mathbf{\varrho }%
}.
\end{equation*}%
Hence, for any $1\leq p<\infty $,%
\begin{equation*}
\left\Vert \mathbf{S}_{1,\tau }f\right\Vert _{p,\mathbf{\varrho }}\leq
3\cdot 3^{\frac{2}{p}}\left[ \mathbf{\varrho }\right] _{p}^{1/p}\left\Vert
f\right\Vert _{p,\mathbf{\varrho }}.
\end{equation*}
\end{proof}

\begin{theorem}
\label{du} Let $1\leq p<\infty $ and $\mathbf{\varrho }$ be a weight on $%
\mathbf{R}$. Then, for $f\in L^{p}\left( \mathbf{\varrho }dx\right) $ we have%
\begin{equation}
\underset{G\in L^{p^{\prime }}\left( \mathbf{\varrho }dx\right) ,\left\Vert
G\right\Vert _{p^{\prime },\mathbf{\varrho }}\leq 1}{\sup }\int\nolimits_{%
\mathbf{R}}\left\vert f\left( x\right) G\left( x\right) \right\vert \mathbf{%
\varrho }(x)dx\text{=}\left\Vert f\right\Vert _{p,\mathbf{\varrho }}.
\label{Cdual}
\end{equation}%
In addition, condition "$G\in L^{p^{\prime }}\left( \mathbf{\varrho }%
dx\right) $" in supremum can be replaced by condition "$G\in L^{p^{\prime
}}\left( \mathbf{\varrho }dx\right) \cap S_{c}$"
\end{theorem}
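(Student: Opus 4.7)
The plan is to prove (\ref{Cdual}) by a standard weighted duality argument: I sandwich $\left\Vert f\right\Vert _{p,\mathbf{\varrho }}$ between an upper bound from the H\"older inequality just stated and a value attained (or approached) by an explicit extremiser $G_{0}$. The refinement of the supremum to $G\in L^{p^{\prime }}\left( \mathbf{\varrho }dx\right) \cap S_{c}$ will then follow by approximating $G_{0}$ with compactly supported simple functions.

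For the upper bound, any admissible $G$ satisfies $\int_{\mathbf{R}}\left\vert f(x)G(x)\right\vert \mathbf{\varrho }(x)\,dx \le \left\Vert f\right\Vert _{p,\mathbf{\varrho }}\left\Vert G\right\Vert _{p^{\prime },\mathbf{\varrho }}\le \left\Vert f\right\Vert _{p,\mathbf{\varrho }}$ by the preceding H\"older-type lemma, so the supremum in (\ref{Cdual}) is at most $\left\Vert f\right\Vert _{p,\mathbf{\varrho }}$. For the matching lower bound, assume $\left\Vert f\right\Vert _{p,\mathbf{\varrho }}>0$, the zero case being trivial. If $1<p<\infty $, I take the natural choice $G_{0}:=\left\Vert f\right\Vert _{p,\mathbf{\varrho }}^{1-p}\left\vert f\right\vert ^{p-1}$; using the identity $(p-1)p^{\prime }=p$, a direct computation yields $\left\Vert G_{0}\right\Vert _{p^{\prime },\mathbf{\varrho }}=1$ and $\int_{\mathbf{R}}\left\vert f\,G_{0}\right\vert \mathbf{\varrho }\,dx=\left\Vert f\right\Vert _{p,\mathbf{\varrho }}$. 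If $p=1$, the constant function $G_{0}\equiv 1$ works at once, since $\left\Vert G_{0}\right\Vert _{\infty ,\mathbf{\varrho }}=1$ and $\int_{\mathbf{R}}\left\vert f\right\vert \mathbf{\varrho }\,dx=\left\Vert f\right\Vert _{1,\mathbf{\varrho }}$. Combining both directions yields (\ref{Cdual}).

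For the second statement I replace $G_{0}$ by a sequence $G_{n}\in L^{p^{\prime }}\left( \mathbf{\varrho }dx\right) \cap S_{c}$ with $\left\Vert G_{n}\right\Vert _{p^{\prime },\mathbf{\varrho }}\le 1$ and $\int_{\mathbf{R}}\left\vert f\,G_{n}\right\vert \mathbf{\varrho }\,dx \to \left\Vert f\right\Vert _{p,\mathbf{\varrho }}$. In the case $p=1$ this is immediate with $G_{n}:=\chi _{[-n,n]}\in S_{c}$: one has $\left\Vert G_{n}\right\Vert _{\infty ,\mathbf{\varrho }}\le 1$ and monotone convergence gives $\int_{-n}^{n}\left\vert f\right\vert \mathbf{\varrho }\,dx\uparrow \left\Vert f\right\Vert _{1,\mathbf{\varrho }}$. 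For $1<p<\infty $ I first truncate the extremiser to $G_{0}\chi _{[-n,n]}$, then approximate this truncation in the $L^{p^{\prime }}\left( \mathbf{\varrho }dx\right) $-norm by simple functions supported in $[-n,n]$, and finally rescale to restore $\left\Vert G_{n}\right\Vert _{p^{\prime },\mathbf{\varrho }}\le 1$; applying H\"older once more shows that the pairings converge to $\left\Vert f\right\Vert _{p,\mathbf{\varrho }}$.

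The only delicate ingredient is the density of $S_{c}$ in $L^{p^{\prime }}\left( \mathbf{\varrho }dx\right) $ used in the last step. This rests on the local integrability of $\mathbf{\varrho }\in A_{\mathbf{p}}$, which is a standard consequence of (\ref{a1123})--(\ref{ap}); once local integrability is available, the usual truncation and simple-approximation argument used for unweighted $L^{p^{\prime }}(\mathbf{R})$ transfers verbatim, since $\mathbf{\varrho }\,dx$ is then a $\sigma $-finite Borel measure on $\mathbf{R}$. Beyond verifying this density the argument is entirely routine.
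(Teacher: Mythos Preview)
Your argument is correct and is, in fact, more self-contained than the paper's own proof, which simply invokes Theorem~18.4 of \cite{yeh} for (\ref{Cdual}) and the methods of Lemma~2.7.2 and Lemma~3.2.14 of \cite{dhhr11} for the refinement to $S_{c}$. Your explicit extremiser $G_{0}=\left\Vert f\right\Vert _{p,\mathbf{\varrho }}^{1-p}\left\vert f\right\vert ^{p-1}$ (respectively $G_{0}\equiv 1$ when $p=1$) together with the H\"older upper bound gives (\ref{Cdual}) directly, and your truncation-plus-simple-approximation scheme recovers the $S_{c}$ statement without appeal to the variable-exponent machinery of \cite{dhhr11}. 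The trade-off is that the paper's citations cover the result in one line, while your route makes the mechanism transparent and avoids external dependencies.

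One small remark: in your final paragraph you appeal to $\mathbf{\varrho }\in A_{\mathbf{p}}$ for local integrability, but the theorem as stated assumes only that $\mathbf{\varrho }$ is a weight. This is not actually needed for your argument: the measure $\mathbf{\varrho }\,dx$ is $\sigma$-finite for any weight (exhaust $\mathbf{R}$ by the sets $\{x\in [-n,n]:\mathbf{\varrho }(x)\le n\}$), and once you have truncated $G_{0}$ to $[-n,n]$ the standard approximation of an $L^{p^{\prime}}$ function by simple functions on a finite-measure space (applied to the restriction of $\mathbf{\varrho }\,dx$) goes through without any control on $\int_{[-n,n]}\mathbf{\varrho }\,dx$. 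So you may drop the $A_{\mathbf{p}}$ reference and keep the statement at the generality the paper claims.
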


\begin{proof}[\textbf{Proof of Theorem \protect\ref{du}}]
(\ref{Cdual}) is a consequence of Theorem 18.4 of \cite{yeh}. On the other
hand, methods given in Lemma 2.7.2 and Lemma 3.2.14 of \cite{dhhr11} imply
that condition "$G\in L^{p^{\prime }}\left( \mathbf{\varrho }dx\right) $" in
supremum can be replaced by condition "$G\in L^{p^{\prime }}\left( \mathbf{%
\varrho }dx\right) \cap S_{c}$"
\end{proof}

\begin{theorem}
\label{tra} Let $1\leq p<\infty $, $\mathbf{\varrho }\in A_{\mathbf{p}}$,
and $f,g\in L^{p}\left( \mathbf{\varrho }dx\right) $. In this case,

(a) The function $F_{f,G}\left( \cdot \right) $ defined in (\ref{efef}) is
bounded, uniformly continuous on $\mathbf{R}$.

(b) If $\ \left\Vert F_{f}\right\Vert _{\mathcal{C}\left( \mathbf{R}\right)
}\leq c_{1}\left\Vert F_{g}\right\Vert _{\mathcal{C}\left( \mathbf{R}\right)
}$ \ holds with an absolute constant $c_{1}>0$, then, we have weighted norm
inequalities%
\begin{equation}
\left\Vert f\right\Vert _{p,\mathbf{\varrho }}\leq c_{1}\mathbb{C}%
_{1}\left\Vert g\right\Vert _{p,\mathbf{\varrho }}  \label{zz1}
\end{equation}%
with $\mathbb{C}_{1}:=\mathbb{C}_{1}\left( p,\mathbf{\varrho }\right)
:=6\cdot 3^{\frac{2}{p}}\left[ \mathbf{\varrho }\right] _{p}^{1/p}.$
\end{theorem}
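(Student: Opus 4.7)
Both assertions are immediate given the machinery already developed. For boundedness, H\"older's inequality in $L^{p}(\mathbf{\varrho}\,dx)$ together with Theorem~\ref{stek} gives
\[
|F_{f}(u)|\le \|\mathbf{S}_{1,u}f\|_{p,\mathbf{\varrho}}\|G\|_{p',\mathbf{\varrho}}\le 3\cdot 3^{2/p}[\mathbf{\varrho}]_{p}^{1/p}\|f\|_{p,\mathbf{\varrho}}=\tfrac{\mathbb{C}_{1}}{2}\|f\|_{p,\mathbf{\varrho}},
\]
uniformly in $u\in\mathbf{R}$. For uniform continuity, the substitution $t\mapsto t+h$ in the integral defining $\mathbf{S}_{1,u+h}f$ yields the identity $\mathbf{S}_{1,u+h}f-\mathbf{S}_{1,u}f=\mathbf{S}_{1,u}(T_{h}f-f)$ with $T_{h}f(y):=f(y+h)$; applying H\"older and Theorem~\ref{stek} once more gives the $u$-uniform bound $|F_{f}(u+h)-F_{f}(u)|\le\tfrac{\mathbb{C}_{1}}{2}\|T_{h}f-f\|_{p,\mathbf{\varrho}}$. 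It remains to verify $\|T_{h}f-f\|_{p,\mathbf{\varrho}}\to 0$ as $h\to 0$: I would approximate $f$ by $\phi\in C_{c}$ (dense in $L^{p}(\mathbf{\varrho}\,dx)$ for $\mathbf{\varrho}\in A_{p}$ and $p<\infty$), handle the model case by dominated convergence on the compact support of $\phi$, and absorb the residual via the operator bound already proved.

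\textbf{Part (b), transference via duality.} Part (a) applied to $g$ gives $\|F_{g}\|_{\mathcal{C}(\mathbf{R})}\le\tfrac{\mathbb{C}_{1}}{2}\|g\|_{p,\mathbf{\varrho}}$, so coupling with the hypothesis yields
\[
\|F_{f}\|_{\mathcal{C}(\mathbf{R})}\le c_{1}\|F_{g}\|_{\mathcal{C}(\mathbf{R})}\le c_{1}\tfrac{\mathbb{C}_{1}}{2}\|g\|_{p,\mathbf{\varrho}}
\]
for every admissible $G$. Invoking Theorem~\ref{du} in the $S_{c}$-version,
\[
\|f\|_{p,\mathbf{\varrho}}=\sup_{G\in L^{p'}(\mathbf{\varrho}\,dx)\cap S_{c},\,\|G\|_{p',\mathbf{\varrho}}\le 1}\int_{\mathbf{R}}|f(x)G(x)|\mathbf{\varrho}(x)\,dx.
\]
Since $F_{f}$ depends on $G$ only through $|G|$, the sign of $G$ is at our disposal: replacing $G$ by $\mathrm{sgn}(f)|G|$ leaves $F_{f}$ unchanged while rewriting the dual pairing as $\int|fG|\mathbf{\varrho}\,dx$. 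It therefore suffices to establish the reverse estimate $\int_{\mathbf{R}}|f(x)G(x)|\mathbf{\varrho}(x)\,dx\le 2\|F_{f}\|_{\mathcal{C}(\mathbf{R})}$; combined with the preceding display this gives $\|f\|_{p,\mathbf{\varrho}}\le 2\cdot c_{1}\cdot(\mathbb{C}_{1}/2)\|g\|_{p,\mathbf{\varrho}}=c_{1}\mathbb{C}_{1}\|g\|_{p,\mathbf{\varrho}}$, as required.

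\textbf{Main obstacle.} The technical heart is the reverse estimate just described. By Fubini,
\[
F_{f}(u)=\int_{\mathbf{R}}f(t)\,\mathbf{S}_{1,0}(|G|\mathbf{\varrho})(t-u)\,dt,
\]
so that $\|F_{f}\|_{\mathcal{C}(\mathbf{R})}$ is the sup norm of the convolution of $f$ with the non-negative kernel $\mathbf{S}_{1,0}(|G|\mathbf{\varrho})$; varying $u\in\mathbf{R}$ exposes the positive and the negative masses of $f$ against this kernel separately. The factor $2$ reflects the elementary loss $\|\psi\|_{p,\mathbf{\varrho}}\le\|\psi^{+}\|_{p,\mathbf{\varrho}}+\|\psi^{-}\|_{p,\mathbf{\varrho}}\le 2\max(\|\psi^{+}\|_{p,\mathbf{\varrho}},\|\psi^{-}\|_{p,\mathbf{\varrho}})$ applied at the level of the dual pairing after splitting $f=f^{+}-f^{-}$. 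The precise quantitative conversion of this ``$\sup_{u}$ of a convolution'' statement into the $L^{p}(\mathbf{\varrho}\,dx)$-pairing with the extremal $G$ is the step most sensitive to the Muckenhoupt hypothesis on $\mathbf{\varrho}$, and is what I would need to develop in detail; the Steklov bound of Theorem~\ref{stek} is the only lossy ingredient and, together with this factor of $2$, accounts exactly for the constant $\mathbb{C}_{1}=6\cdot 3^{2/p}[\mathbf{\varrho}]_{p}^{1/p}$ in the theorem.
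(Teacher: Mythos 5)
Part (a) of your proposal is essentially the paper's argument (density of $C_{c}$ in $L^{p}(\mathbf{\varrho}\,dx)$, H\"older plus Theorem~\ref{stek}, and a three-term approximation to pass from the dense class to general $f$), and the upper half of part (b), namely $\|F_{g}\|_{\mathcal{C}(\mathbf{R})}\leq 3\cdot 3^{2/p}[\mathbf{\varrho}]_{p}^{1/p}\|g\|_{p,\mathbf{\varrho}}$, also matches the paper. The gap is exactly in the step you yourself flag as the main obstacle. First, you never actually prove the reverse estimate; you only describe a strategy. Second, and more seriously, the intermediate inequality you propose to prove, $\int_{\mathbf{R}}|f(x)G(x)|\mathbf{\varrho}(x)\,dx\leq 2\|F_{f}\|_{\mathcal{C}(\mathbf{R})}$ for a fixed admissible $G$, is false for sign-changing $f$: since $F_{f}(u)=\int_{\mathbf{R}}\mathbf{S}_{1,u}f(x)\,|G(x)|\mathbf{\varrho}(x)\,dx$ carries no absolute value inside the Steklov average, a rapidly oscillating $f$ with vanishing local means (for instance $f(x)=\sin(2\pi Nx)\chi_{[0,1]}(x)$ with $N$ large) makes $\|\mathbf{S}_{1,u}f\|_{p,\mathbf{\varrho}}$ uniformly small, hence $\|F_{f}\|_{\mathcal{C}(\mathbf{R})}$ small for every admissible $G$, while $\int_{\mathbf{R}}|fG|\mathbf{\varrho}\,dx$ stays of order $\|f\|_{p,\mathbf{\varrho}}$. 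Splitting $f=f^{+}-f^{-}$ ``at the level of the dual pairing,'' after $F_{f}$ has already been formed, cannot repair this, because the cancellation has already taken place inside $\mathbf{S}_{1,u}f$.

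The paper avoids this by performing the positivity reduction \emph{before} forming $F$: it first proves (b) for $0\leq f,g$, where the lower bound is obtained by inserting the near-extremal $\bar{G}_{\varepsilon}\in L^{p^{\prime}}(\mathbf{\varrho}\,dx)\cap S_{c}$ supplied by Theorem~\ref{du} into the definition of $F_{f}$ and evaluating at the single point $u=0$, which gives $\|F_{f}\|_{\mathcal{C}(\mathbf{R})}\geq F_{f}(0)\geq\|f\|_{p,\mathbf{\varrho}}-\varepsilon$; the passage to general $f,g$ (applying the hypothesis to the nonnegative parts) is what produces the extra factor $2$ in $\mathbb{C}_{1}=6\cdot 3^{2/p}[\mathbf{\varrho}]_{p}^{1/p}$. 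Your convolution identity $F_{f}(u)=\int_{\mathbf{R}}f(t)\,\mathbf{S}_{1,0}(|G|\mathbf{\varrho})(t-u)\,dt$ is correct and could serve as a starting point, but only after the reduction to $f\geq 0$; as organized, your plan attacks a statement that no choice of constant can rescue.
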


\begin{proof}[\textbf{Proof of Theorem \protect\ref{tra}}]
(a) Since $C_{c}$ is a dense subset (\cite[proof of Theorem 4.1, item I]%
{koklSamko}) of $L^{p}\left( \mathbf{\varrho }dx\right) $, we consider
functions $H\in C_{c}$ and prove that \newline$F_{H}\left( u\right) $=$%
\int\nolimits_{\boldsymbol{R}}(\mathsf{S}_{1,u}H)\left( x\right) \left\vert
G\left( x\right) \right\vert \mathbf{\varrho }\left( x\right) dx$ is bounded
and uniformly continuous on $\boldsymbol{R}$, where $G\in L^{p^{\prime
}}\left( \mathbf{\varrho }dx\right) \cap S_{c}$ and $\left\Vert G\right\Vert
_{p^{\prime },\mathbf{\varrho }}\leq 1$. Boundedness of $F_{H}\left( \cdot
\right) $ is easy consequence of the H\"{o}lder's inequality and Theorem \ref%
{stek}. On the other hand, note that $H$ is uniformly continuous on $%
\boldsymbol{R}$, see e.g. Lemma 23.42 of \cite[pp.557-558]{yeh}. Take $%
\varepsilon >0$ and $u_{1},u_{2},x\in \boldsymbol{R}$. Then, there exists a $%
\delta :=\delta \left( \varepsilon \right) >0$ such that%
\begin{equation*}
\left\vert H\left( x+u_{1}\right) -H\left( x+u_{2}\right) \right\vert \leq 
\frac{\varepsilon }{2\left( 1+\left\langle \mathbf{\varrho }\right\rangle _{%
\text{supp}\left( G\right) }\right) }
\end{equation*}%
for $\left\vert u_{1}-u_{2}\right\vert <\delta $. Then, for $\left\vert
u_{1}-u_{2}\right\vert <\delta $, $u_{1},u_{2}\in \boldsymbol{R}$ we have%
\begin{equation*}
\left\vert F_{H}\left( u_{1}\right) -F_{H}\left( u_{2}\right) \right\vert
=\left\vert \int\nolimits_{\boldsymbol{R}}\left( S_{1,u_{1}}H\left( x\right)
-S_{1,u_{2}}H\left( x\right) \right) \left\vert G\left( x\right) \right\vert 
\mathbf{\varrho }\left( x\right) dx\right\vert 
\end{equation*}%
\begin{equation*}
\leq \frac{\varepsilon }{2\left( 1+\left\langle \mathbf{\varrho }%
\right\rangle _{\text{supp}\left( G\right) }\right) }\int\nolimits_{%
\boldsymbol{R}}\left\vert G\left( x\right) \right\vert \mathbf{\varrho }%
\left( x\right) dx
\end{equation*}%
\begin{equation*}
\leq \frac{\varepsilon }{\left( 1+\left\langle \mathbf{\varrho }%
\right\rangle _{\text{supp}\left( G\right) }\right) }\left\langle \mathbf{%
\varrho }\right\rangle _{\text{supp}\left( G\right) }\left\Vert G\right\Vert
_{p^{\prime },\mathbf{\varrho }}<\varepsilon .
\end{equation*}%
Now, the conclusion follows for the class $C_{c}$. For the general case $%
f\in L_{p\left( \cdot \right) }$ there exists an $H\in C_{c}\left( 
\boldsymbol{R}\right) $ so that 
\begin{equation*}
\left\Vert f-H\right\Vert _{p\left( \cdot \right) }<\xi /(12\cdot 3^{\frac{2%
}{p}}\left[ \mathbf{\varrho }\right] _{p}^{1/p})
\end{equation*}%
for any $\xi >0.$ Then, for this $\xi $,%
\begin{equation*}
\left\vert F_{f}\left( u_{1}\right) -F_{f}\left( u_{2}\right) \right\vert
\leq \left\vert \int\nolimits_{\boldsymbol{R}}\mathsf{S}_{1,u_{1}}(f-H)%
\left( x\right) \left\vert G\left( x\right) \right\vert \mathbf{\varrho }%
\left( x\right) dx\right\vert +
\end{equation*}%
\begin{equation*}
+\left\vert \int\nolimits_{\boldsymbol{R}}\left( \mathsf{S}_{1,u_{1}}H\left(
x\right) -\mathsf{S}_{1,u_{2}}H\left( x\right) \right) \left\vert G\left(
x\right) \right\vert \mathbf{\varrho }\left( x\right) dx\right\vert 
\end{equation*}%
\begin{equation*}
+\left\vert \int\nolimits_{\boldsymbol{R}}\mathsf{S}_{1,u_{2}}(H-f)\left(
x\right) \left\vert G\left( x\right) \right\vert \mathbf{\varrho }\left(
x\right) dx\right\vert 
\end{equation*}%
\begin{equation*}
\leq \left\Vert \mathsf{S}_{1,u_{1}}\left( f-H\right) \right\Vert _{p,%
\mathbf{\varrho }}+\left\vert \int\nolimits_{\boldsymbol{R}}\mathsf{S}%
_{1}\left( H\left( x+u_{1}\right) -H\left( x+u_{2}\right) \right) \left\vert
G\left( x\right) \right\vert \mathbf{\varrho }\left( x\right) dx\right\vert 
\end{equation*}%
\begin{equation*}
+\left\Vert \mathsf{S}_{1,u_{2}}\left( f-H\right) \right\Vert _{p,\mathbf{%
\varrho }}
\end{equation*}%
\begin{equation*}
\leq 6\cdot 3^{\frac{2}{p}}\left[ \mathbf{\varrho }\right]
_{p}^{1/p}\left\Vert f-H\right\Vert _{p,\mathbf{\varrho }}+\xi /2\leq \xi
/2+\xi /2=\xi 
\end{equation*}%
As a result $F_{f}$ is bounded, uniformly continuous function defined on $%
\boldsymbol{R}.$

(b) Let $0\leq f,g\in L^{p}\left( \mathbf{\varrho }dx\right) $. If $%
\left\Vert f\right\Vert _{p,\varrho }=\left\Vert g\right\Vert _{p,\varrho
}=0 $, then, results (\ref{zz1}) is obvious. So we assume that $\left\Vert
f\right\Vert _{p,\varrho },\left\Vert g\right\Vert _{p,\varrho }\in \left(
0,+\infty \right) $. Then, 
\begin{align*}
\left\Vert F_{f}\right\Vert _{\mathcal{C}\left( \mathbf{R}\right) }& \leq
c_{1}\left\Vert F_{g}\right\Vert _{\mathcal{C}\left( \mathbf{R}\right)
}=c_{1}\left\Vert \int\nolimits_{\mathbf{R}}\mathsf{S}_{1,u}(g)\left(
x\right) \left\vert G\left( x\right) \right\vert \mathbf{\varrho }\left(
x\right) dx\right\Vert _{\mathcal{C}\left( \mathbf{R}\right) } \\
& =c_{1}\sup_{u\in \mathbf{R}}\int\nolimits_{\mathbf{R}}\mathsf{S}%
_{1,u}(g)\left( x\right) \left\vert G\left( x\right) \right\vert \mathbf{%
\varrho }\left( x\right) dx \\
& =c_{1}\sup_{u\in \mathbf{R}}\left\Vert \mathsf{S}_{1,u}(g)\right\Vert _{p,%
\mathbf{\varrho }}\left\Vert G\right\Vert _{p^{\prime },\mathbf{\varrho }%
}\leq 3c_{1}3^{\frac{2}{p}}\left[ \mathbf{\varrho }\right]
_{p}^{1/p}\left\Vert g\right\Vert _{p,\varrho }.
\end{align*}

On the other hand, for any $\varepsilon \in (0,\left\Vert f\right\Vert _{p,%
\mathbf{\varrho }}]$ we can choose $\bar{G}_{\varepsilon }\in L^{p^{\prime
}}\left( \mathbf{\varrho }dx\right) \cap S_{c}$ with 
\begin{equation*}
\int\nolimits_{\mathbf{R}}h\left( x\right) \left\vert \bar{G}_{\varepsilon
}\left( x\right) \right\vert \mathbf{\varrho }\left( x\right) dx\geq
\left\Vert h\right\Vert _{p,\mathbf{\varrho }}-\varepsilon \text{,\qquad }%
\left\Vert \bar{G}_{\varepsilon }\right\Vert _{p^{\prime },\mathbf{\varrho }%
}\leq 1\text{,}
\end{equation*}%
one can find%
\begin{eqnarray*}
\left\Vert F_{f}\right\Vert _{\mathcal{C}\left( \mathbf{R}\right) } &\geq
&\left\vert F_{f}\left( 0\right) \right\vert \geq \int\nolimits_{\mathbf{R}}%
\mathsf{S}_{1,0}f\left( x\right) \left\vert G\left( x\right) \right\vert 
\mathbf{\varrho }\left( x\right) dx \\
&=&\mathsf{S}_{1,0}\left( \int\nolimits_{\mathbf{R}}f\left( x\right)
\left\vert G\left( x\right) \right\vert \mathbf{\varrho }\left( x\right)
dx\right) \geq \mathsf{S}_{1,0}\left( \left\Vert f\right\Vert _{p,\mathbf{%
\varrho }}-\varepsilon \right) \\
&=&\left\Vert f\right\Vert _{p,\mathbf{\varrho }}-\varepsilon .
\end{eqnarray*}%
In the last inequality we take as $\varepsilon \rightarrow 0+$ and obtain $%
\left\Vert F_{f}\right\Vert _{\mathcal{C}\left( \mathbf{R}\right) }\geq
\left\Vert f\right\Vert _{p,\mathbf{\varrho }}$. Combining these
inequalities we get%
\begin{equation*}
\left\Vert f\right\Vert _{p,\mathbf{\varrho }}\leq \left\Vert
F_{f}\right\Vert _{\mathcal{C}\left( \mathbf{R}\right) }\leq c_{1}\left\Vert
F_{g}\right\Vert _{\mathcal{C}\left( \mathbf{R}\right) }\leq 3c_{1}3^{\frac{2%
}{p}}\left[ \mathbf{\varrho }\right] _{p}^{1/p}\left\Vert g\right\Vert
_{p,\varrho }.
\end{equation*}

In the general case $f,g\in L^{p}\left( \mathbf{\varrho }dx\right) $ we get%
\begin{equation}
\left\Vert f\right\Vert _{p,\varrho }\leq 6c_{1}3^{\frac{2}{p}}\left[ 
\mathbf{\varrho }\right] _{p}^{1/p}\left\Vert g\right\Vert _{p,\varrho }.
\label{sn1}
\end{equation}%
Then (\ref{zz1}) holds.
\end{proof}

\subsection{Averaging Operator and Mollifier}

\begin{definition}
\label{ddd}Let $B\subseteq \mathbf{R}$ be an open set, $\phi \in L_{1}\left(
B\right) $ and $\int\nolimits_{B}\phi \left( t\right) dt=1$. For each $t>0$
we define $\phi _{t}\left( x\right) =\frac{1}{t}\phi \left( \frac{x}{t}%
\right) $. Sequence $\left\{ \phi _{t}\right\} $ will be called approximate
identity. A function%
\begin{equation*}
\tilde{\phi}\left( x\right) =\sup\limits_{\left\vert y\right\vert \geq
\left\vert x\right\vert }\left\vert \phi \left( y\right) \right\vert 
\end{equation*}%
will be called radial majorant of $\phi .$ If $\tilde{\phi}\in L_{1}\left(
B\right) $, then, sequence $\left\{ \phi _{t}\right\} $ will be called
potential-type approximate identity.
\end{definition}

\begin{definition}
(a) Let $U\subset \mathbb{R}$ be \ a measurable set and%
\begin{equation*}
A_{U}f:=\frac{1}{\left\vert U\right\vert }\int\limits_{U}\left\vert f\left(
t\right) \right\vert dt.
\end{equation*}

(b) For a family $Q$ of open sets $U\subset \mathbb{R}$ we define averaging
operator by 
\begin{equation*}
T_{Q}:L_{loc}^{1}\rightarrow L^{0},
\end{equation*}%
\begin{equation*}
T_{Q}f\left( x\right) :=\sum_{U\in Q}\chi _{U}\left( x\right) A_{U}f,\quad
x\in \mathbb{R},
\end{equation*}%
where $L^{0}$ is the set of measurable functions on $\mathbb{R}$.

(c) For a measurable set $A\subset \mathbf{R}$, symbol $\left\vert
A\right\vert $ will represent the Lebesgue measure of $A$.
\end{definition}

Using Proposition 4.33 of \cite{uf13} and Part 5.2 of \cite[p.150]{dhhr11}
we have the following Theorem \ref{Aver}.

\begin{theorem}
\label{Aver}Suppose that $1\leq p<\infty $, $\mathbf{\varrho \in }A_{\mathbf{%
p}}$ and $f\in L^{p}\left( \mathbf{\varrho }dx\right) $. If $Q$ is $1$%
-finite family of open bounded subsets of $\mathbb{R}$, then, the averaging
operator $T_{Q}$ is uniformly bounded in $L^{p}\left( \mathbf{\varrho }%
dx\right) $, namely,%
\begin{equation*}
\left\Vert T_{Q\text{ }}f\right\Vert _{p,\mathbf{\varrho }}\leq \left[ 
\mathbf{\varrho }\right] _{p}^{1/p}\left\Vert f\right\Vert _{p,\mathbf{%
\varrho }}
\end{equation*}%
holds.
\end{theorem}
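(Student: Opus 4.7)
\textbf{Proof plan for Theorem \ref{Aver}.} The plan is to exploit the fact that a $1$-finite family $Q$ consists of pairwise disjoint sets (by Definition of locally $1$-finite), so the sum defining $T_Q f$ has at most one non-zero term at each point. This lets me compute the norm on the left-hand side as a clean sum over $U \in Q$ and then estimate each term using the Muckenhoupt condition \eqref{ap} (or \eqref{a1123}).

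First I would treat the case $1<p<\infty$. Because the sets are disjoint I have
\begin{equation*}
\left\Vert T_Q f\right\Vert_{p,\mathbf{\varrho}}^{p} = \sum_{U\in Q} (A_U f)^{p}\, \langle \mathbf{\varrho}\rangle_{U}.
\end{equation*}
Next I would bound $A_U f$ from above by Hölder's inequality with exponents $p$ and $p'$, inserting the weight factors $\mathbf{\varrho}^{1/p}$ and $\mathbf{\varrho}^{-1/p}$:
\begin{equation*}
A_U f \leq \frac{1}{|U|}\Bigl(\int_{U} |f(t)|^{p} \mathbf{\varrho}(t)\, dt\Bigr)^{1/p} \Bigl(\int_{U} \mathbf{\varrho}^{-1/(p-1)}(t)\, dt\Bigr)^{1/p'}.
\end{equation*}
Raising this to the $p$-th power and multiplying by $\langle \mathbf{\varrho}\rangle_{U}$ creates exactly the product $|U|^{-p} \langle \mathbf{\varrho}\rangle_{U} \langle \mathbf{\varrho}^{1/(1-p)}\rangle_{U}^{p-1}$, which is bounded by $[\mathbf{\varrho}]_p$ via \eqref{ap}. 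What remains is $\sum_{U\in Q} \int_U |f|^p \mathbf{\varrho}\, dt$, and disjointness gives $\sum_{U} \int_U \leq \int_{\mathbf{R}}$, yielding the required bound $[\mathbf{\varrho}]_p \|f\|_{p,\mathbf{\varrho}}^p$.

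For the endpoint case $p=1$, I would do essentially the same computation but simpler: disjointness gives
\begin{equation*}
\left\Vert T_Q f\right\Vert_{1,\mathbf{\varrho}} = \sum_{U\in Q} \frac{\langle \mathbf{\varrho}\rangle_{U}}{|U|} \int_{U} |f(t)|\, dt,
\end{equation*}
and I would use the $A_1$ condition \eqref{a1123}, namely $|U|^{-1} \langle \mathbf{\varrho}\rangle_U \leq [\mathbf{\varrho}]_1 \,\mathrm{essinf}_{t\in U}\, \mathbf{\varrho}(t)$, to absorb the quotient inside the integral as $[\mathbf{\varrho}]_1 \mathbf{\varrho}(t)$, then reassemble using disjointness.

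I do not expect a genuine obstacle here; the proof is essentially bookkeeping once one realises that $1$-finiteness means disjointness, after which the Muckenhoupt condition is tailor-made for the estimate. The only mild subtlety is making sure the Hölder step is applied with the correct conjugate weights so that the $A_p$ characteristic appears cleanly; any other pairing of exponents would leave a spurious factor. No density argument or approximation step is needed because both sides of the inequality are already controlled by a single finite integral whenever $f \in L^{p}(\mathbf{\varrho}dx)$.
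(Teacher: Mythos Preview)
Your proposal is correct. The paper does not give its own detailed proof of Theorem~\ref{Aver}; it simply cites Proposition~4.33 of \cite{uf13} and Part~5.2 of \cite{dhhr11}. Your direct argument---disjointness of the sets in a $1$-finite family, H\"older with the weight split $\mathbf{\varrho}^{1/p}\mathbf{\varrho}^{-1/p}$, and an appeal to the $A_{\mathbf p}$ constant---is exactly the standard computation behind those references, and is in fact the same mechanism the paper spells out in the proof of Theorem~\ref{stek} for the Steklov operator. So your route is a self-contained version of what the paper outsources; nothing is missing.
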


\begin{theorem}
\label{bt}Suppose that $1\leq p<\infty $, $\mathbf{\varrho \in }A_{\mathbf{p}%
}$, $f\in L^{p}\left( \mathbf{\varrho }dx\right) $, $\phi $ is a
potential-type approximate identity. Then, for any $t>0$,%
\begin{equation*}
\left\Vert f\ast \phi _{t}\right\Vert _{p,\mathbf{\varrho }}\leq 2\left\Vert 
\tilde{\phi}\right\Vert _{1}\mathbb{C}_{1}\left\Vert f\right\Vert _{p,%
\mathbf{\varrho }}
\end{equation*}%
and%
\begin{equation*}
\underset{t\rightarrow 0}{\lim }\left\Vert f\ast \phi _{t}-f\right\Vert _{p,%
\mathbf{\varrho }}=0
\end{equation*}%
hold.
\end{theorem}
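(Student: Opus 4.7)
The plan is to apply the transference Theorem~\ref{tra} to reduce both assertions to the corresponding statements for the scalar auxiliary function $F_f\in\mathcal{C}(\mathbf{R})$, where they become classical properties of approximate identities acting on bounded uniformly continuous functions. The key observation is the intertwining identity
\[
F_{f\ast\phi_t}(u)\;=\;(F_f\ast\phi_t)(u),\qquad u\in\mathbf{R},
\]
valid for every admissible $G$ in (\ref{efef}). This is a routine Fubini computation: after unfolding $\mathbf{S}_{1,u}(f\ast\phi_t)(x)$ as a double integral and interchanging the order of integration, the inner integral becomes $\mathbf{S}_{1,u-y}f(x)$; multiplying by $|G(x)|\mathbf{\varrho}(x)$ and integrating in $x$ (one more Fubini swap) reassembles $(F_f\ast\phi_t)(u)$.

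For the boundedness, Theorem~\ref{tra}(a) guarantees $F_f\in\mathcal{C}(\mathbf{R})$. Using $|\phi_t|\le\tilde\phi_t$ pointwise and the scale invariance $\|\tilde\phi_t\|_1=\|\tilde\phi\|_1$, the intertwining identity together with the trivial estimate $\|h\ast\phi_t\|_\infty\le\|h\|_\infty\|\phi_t\|_1$ yields
\[
\|F_{f\ast\phi_t}\|_{\mathcal{C}(\mathbf{R})}\;\le\;\|\tilde\phi\|_1\,\|F_f\|_{\mathcal{C}(\mathbf{R})}.
\]
Applying Theorem~\ref{tra}(b) with $c_1=\|\tilde\phi\|_1$ then transfers this to $\|f\ast\phi_t\|_{p,\mathbf{\varrho}}\le\|\tilde\phi\|_1\mathbb{C}_1\|f\|_{p,\mathbf{\varrho}}$; the extra factor of $2$ in the statement is slack.

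For the convergence, the same strategy works: since $F_f$ is bounded and uniformly continuous on $\mathbf{R}$, the standard splitting
\[
F_f\ast\phi_t(u)-F_f(u)=\int_{\mathbf{R}}(F_f(u-y)-F_f(u))\,\phi_t(y)\,dy
\]
into $|y|<\delta$ and $|y|\ge\delta$---with the first part controlled by $\varepsilon\|\tilde\phi\|_1$ via uniform continuity and the second by $2\|F_f\|_\infty\int_{|z|\ge\delta/t}\tilde\phi(z)\,dz\to 0$ by dominated convergence---shows that $F_f\ast\phi_t\to F_f$ uniformly on $\mathbf{R}$, so $\|F_{f\ast\phi_t-f}\|_{\mathcal{C}(\mathbf{R})}\to 0$.

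The main obstacle is the final transference step: Theorem~\ref{tra}(b) as stated is only a relative comparison, so it does not directly deliver the conclusion $\|f\ast\phi_t-f\|_{p,\mathbf{\varrho}}\to 0$ from $\|F_{f\ast\phi_t-f}\|_{\mathcal{C}(\mathbf{R})}\to 0$. The cleanest remedy is to extract from the non-negative case of the proof of Theorem~\ref{tra}(b) the absolute bound $\|h\|_{p,\mathbf{\varrho}}\le C\|F_h\|_{\mathcal{C}(\mathbf{R})}$, obtained via the decomposition $h=h_+-h_-$ and application of the non-negative estimate to each piece. Alternatively, one can combine the uniform boundedness just established with density of $C_c$ in $L^p(\mathbf{\varrho}\,dx)$ to reduce the convergence claim to compactly supported continuous test functions, where it follows from uniform convergence on a bounded interval together with a tail estimate that exploits the compact support of the test function and $\tilde\phi\in L^1$.
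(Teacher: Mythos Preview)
Your proof follows the same route as the paper: establish the intertwining identity $F_{f\ast\phi_t}=(F_f)\ast\phi_t$ (which the paper simply cites from \cite{Ak17}) and then transfer the $\mathcal{C}(\mathbf{R})$ estimate $\|(F_f)\ast\phi_t\|_{\mathcal{C}(\mathbf{R})}\le\|\tilde\phi\|_1\|F_f\|_{\mathcal{C}(\mathbf{R})}$ back to $L^p(\mathbf{\varrho}\,dx)$ via Theorem~\ref{tra}. The paper's written proof stops after the boundedness inequality and does not address the limit statement at all; your discussion of the convergence---in particular the second remedy, combining the uniform boundedness just proved with density of $C_c$ in $L^p(\mathbf{\varrho}\,dx)$---is a correct and standard way to complete that part.
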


\begin{proof}[\textbf{Proof of Theorem \protect\ref{bt}}]
We can use transference result. Since \cite[Theorem 12]{Ak17}%
\begin{equation*}
F_{f\ast \phi _{t}}=(F_{f})\ast \phi _{t}
\end{equation*}%
we find%
\begin{equation*}
\left\Vert f\ast \phi _{t}\right\Vert _{p,\mathbf{\varrho }}\leq \left\Vert
F_{f\ast \phi _{t}}\right\Vert _{\mathcal{C}\left( \mathbf{R}\right)
}=\left\Vert (F_{f})\ast \phi _{t}\right\Vert _{\mathcal{C}\left( \mathbf{R}%
\right) }\leq
\end{equation*}%
\begin{equation*}
\leq \left\Vert \tilde{\phi}\right\Vert _{1}\left\Vert F_{f}\right\Vert _{%
\mathcal{C}\left( \mathbf{R}\right) }\leq \mathbb{C}_{1}\left\Vert \tilde{%
\phi}\right\Vert _{1}\left\Vert f\right\Vert _{p,\mathbf{\varrho }}.
\end{equation*}
\end{proof}

\section{Exponential Approximation}

\begin{definition}
Let $X:=L^{p}\left( \mathbf{R}\right) $ or $L^{p}\left( \mathbf{\varrho }%
dx\right) $ or $\mathcal{C}\left( \mathbf{R}\right) $.

(i) We define $\mathcal{G}_{\sigma }\left( X\right) $ as the class of entire
function of exponential type $\sigma $ that belonging to $X$ . The quantity%
\begin{equation}
A_{\sigma }(f)_{X}:=\inf\limits_{g}\{\Vert f-g\Vert _{X}:g\in \mathcal{G}%
_{\sigma }\left( X\right) \}  \label{fd}
\end{equation}%
is called the deviation of the function $f\in X$ from $\mathcal{G}_{\sigma
}\left( X\right) $.

(ii) Let $W_{X}^{r}$, $r\in \mathbb{N}$, be the class of functions $f\in X$
such that derivatives $f^{\left( k\right) }$ exist for $k=1,...,r-1$, $%
f^{\left( r-1\right) }$ absolutely continuous and $f^{\left( r\right) }\in X$%
.

(iii) Define $\mathcal{G}_{\sigma }\left( p\right) :=\mathcal{G}_{\sigma
}\left( L^{p}\left( \mathbf{R}\right) \right) $, $\mathcal{G}_{\sigma
}\left( p\mathbf{,\varrho }\right) :=\mathcal{G}_{\sigma }\left( L^{p}\left( 
\mathbf{\varrho }dx\right) \right) $, $\mathcal{G}_{\sigma }\left( \infty
\right) :=\mathcal{G}_{\sigma }\left( \mathcal{C}\left( \mathbf{R}\right)
\right) $, $A_{\sigma }(f)_{p}:=A_{\sigma }(f)_{L^{p}\left( \mathbf{R}%
\right) }$, $A_{\sigma }(f)_{p\mathbf{,\varrho }}:=A_{\sigma
}(f)_{L^{p}\left( \mathbf{\varrho }dx\right) }$, $A_{\sigma }(f)_{\infty
}:=A_{\sigma }(f)_{\mathcal{C}\left( \mathbf{R}\right) }$, $%
W_{p}^{r}:=W_{L^{p}\left( \mathbf{R}\right) }^{r}$, $W_{p,\mathbf{\varrho }%
}^{r}:=W_{L^{p}\left( \mathbf{\varrho }dx\right) }^{r}$ and $W_{\infty
}^{r}:=W_{\mathcal{C}\left( \mathbf{R}\right) }^{r}$.
\end{definition}

In the following result of C. Bardaro, P.L. Butzer, R.L. Stens and G. Vinti,
exponential approximation result of the dela Val\`{e}e Poussin operator in $%
L^{p}\left( \mathbf{R}\right) $ $1\leq p\leq \infty $ was proved.

\begin{theorem}
\label{rmrk}\cite{bbsv06} Let $\sigma >0$, $1\leq p\leq \infty $, $f\in
L^{p}\left( \mathbf{R}\right) $,%
\begin{equation*}
\vartheta \left( x\right) :=\frac{2}{\pi }\frac{\sin \left( x/2\right) \sin
(3x/2)}{x^{2}}
\end{equation*}%
and%
\begin{equation*}
J\left( f,\sigma \right) =\sigma \int\nolimits_{\mathbf{R}}f\left(
x-u\right) \vartheta \left( \sigma u\right) du
\end{equation*}%
be the dela Val\`{e}e Poussin operator (\cite[definition given in (5.3)]%
{bbsv06}). It is known (see (5.4)-(5.5) of \cite{bbsv06}) that, if $f\in
L^{p}\left( \mathbf{R}\right) $, $1\leq p\leq \infty $, then,

(i) $J\left( f,\sigma \right) \in \mathcal{G}_{2\sigma }\left( p\right) $,

(ii) $J\left( g_{\sigma },\sigma \right) =g_{\sigma }$ for any $g_{\sigma
}\in \mathcal{G}_{\sigma }\left( p\right) $,

(iii) $\Vert J\left( f,\sigma \right) \Vert _{L_{p}\left( \mathbf{R}\right)
}\leq \frac{3}{2}\Vert f\Vert _{L_{p}\left( \mathbf{R}\right) }$,

(iv) $\left( J\left( f,\sigma \right) \right) ^{\left( r\right) }=J\left(
f^{\left( r\right) },\sigma \right) $ for any $r\in \mathbb{N}$ and $f\in
W_{p}^{r}$,

(v) $\Vert J\left( f,\frac{\sigma }{2}\right) -f\Vert _{L_{p}\left( \mathbf{R%
}\right) }\rightarrow 0$ (as $\sigma \rightarrow \infty $) and hence%
\begin{equation*}
\Vert \left( J\left( f,\frac{\sigma }{2}\right) \right) ^{\left( k\right)
}-f^{\left( k\right) }\Vert _{L_{p}\left( \mathbf{R}\right) }\rightarrow 0%
\text{ as }\sigma \rightarrow \infty ,
\end{equation*}%
for $f\in W_{p}^{r}$ and $1\leq k\leq r$.
\end{theorem}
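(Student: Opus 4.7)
The plan is to verify the five assertions (i)--(v) separately by reducing them to Fourier-analytic properties of the kernel $\vartheta$. The key preliminary computation identifies $\widehat{\vartheta}$ explicitly. Using the trigonometric identity $2\sin(x/2)\sin(3x/2)=\cos x-\cos 2x$, one writes
\[
\vartheta(x)=\frac{1-\cos 2x}{\pi x^{2}}-\frac{1-\cos x}{\pi x^{2}}.
\]
Each summand is a Fej\'{e}r-type kernel on $\mathbf{R}$ whose Fourier transform is the tent $\max(a-|\xi|,0)$ supported on $[-a,a]$, with $a=2$ and $a=1$ respectively. Hence $\widehat{\vartheta}$ is the trapezoidal function equal to $1$ on $[-1,1]$, linear between the values $1$ and $0$ on each of $[-2,-1]$ and $[1,2]$, and vanishing outside $[-2,2]$.

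With this description in hand, the spectral statements are immediate. Setting $K_{\sigma}(u):=\sigma \vartheta(\sigma u)$, we have $\widehat{K_{\sigma}}(\xi)=\widehat{\vartheta}(\xi/\sigma)$, which is supported in $[-2\sigma,2\sigma]$ and equals $1$ on $[-\sigma,\sigma]$. The Paley--Wiener (or Paley--Wiener--Schwartz, for $p>2$) theorem then yields (i), since $J(f,\sigma)=f \ast K_{\sigma}$ has spectrum contained in $[-2\sigma,2\sigma]$ and lies in $L^{p}(\mathbf{R})$ by Young's inequality. Assertion (ii) follows because on the Fourier side the multiplier $\widehat{K_{\sigma}}$ equals $1$ on the spectrum of any $g_{\sigma}\in\mathcal{G}_{\sigma}(p)$, so $g_{\sigma}\ast K_{\sigma}=g_{\sigma}$. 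For (iii), Young's convolution inequality reduces the task to the sharp identity $\|\vartheta\|_{1}=3/2$.

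This last step is the one I expect to be the main technical obstacle. Because $\vartheta$ changes sign (the two Fej\'{e}r summands do not dominate each other pointwise), the naive triangle inequality applied to the two kernels yields only $\|\vartheta\|_{1}\le 3$. A careful evaluation uses the factorization $\cos x-\cos 2x=-(2\cos x+1)(\cos x-1)$, which localizes the sign changes at points where $2\cos x+1=0$; one then splits the integral $\int |\vartheta|\,dx$ according to these sign intervals and sums the pieces, or alternatively invokes a duality argument based on the trapezoidal Fourier representation. Assertion (iv) is the routine commutation $(f\ast K_{\sigma})^{(r)}=f^{(r)}\ast K_{\sigma}$, valid under the regularity hypothesis $f\in W_{p}^{r}$ by differentiation under the integral. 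Finally, (v) is a standard approximate-identity argument: since $\int \vartheta\,dx=\widehat{\vartheta}(0)=1$ and $\vartheta(x)=O(1/x^{2})$ at infinity provides an integrable radial majorant in the sense of Definition~\ref{ddd}, the family $\{K_{\sigma/2}\}$ is a potential-type approximate identity, so $J(f,\sigma/2)\to f$ in $L^{p}(\mathbf{R})$ as $\sigma\to\infty$; combining this with (iv) applied to $f^{(k)}$ gives the derivative convergence.
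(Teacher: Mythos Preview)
The paper does not prove Theorem~\ref{rmrk}: the result is quoted from \cite{bbsv06} (note the citation in the theorem header and the explicit pointer to (5.4)--(5.5) of that reference), and no argument is given in the present text. Your sketch therefore stands on its own rather than competing with anything here, and it follows the standard Fourier-analytic route one finds in the Butzer school: compute $\widehat{\vartheta}$ as the trapezoid equal to $1$ on $[-1,1]$ and supported in $[-2,2]$; read off (i) and (ii) from Paley--Wiener and the multiplier identity; obtain (iii) from Young's convolution inequality; and treat (iv)--(v) via differentiation under the integral and the potential-type approximate-identity framework of Definition~\ref{ddd}. Structurally this is correct and would serve as a self-contained proof.

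One caveat on (iii): you describe $\|\vartheta\|_{1}=3/2$ as a ``sharp identity'', but the theorem only asserts the bound $\le 3/2$, and an explicit evaluation of $\int_{\mathbf{R}}|\vartheta|$ through the sign decomposition you propose does not collapse to a simple closed form. The factorization $\cos x-\cos 2x=-(2\cos x+1)(\cos x-1)$ correctly locates the sign changes at the zeros of $\sin(x/2)\sin(3x/2)$, but summing the resulting oscillatory pieces over $(2k\pi/3,2(k+1)\pi/3)$ is not a one-line computation; you should expect at best to bound the sum above by $3/2$, not to hit it exactly. If the precise constant matters to you, it is simplest to follow the argument in \cite{bbsv06} rather than recompute $\|\vartheta\|_{1}$ from scratch.
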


For $r\in \mathbb{N}$, we define $C^{r}\left( \mathbf{R}\right) $ consisting
of every member $f\in C(\mathbf{R})$ such that the derivative $f^{\left(
k\right) }$ exists and is continuous on $\mathbf{R}$ for $k=1,...,r$.

\subsection{Modulus of Smoothness and translated Steklov average}

As a corollary of Theorem \ref{tra} we have

\begin{theorem}
\label{stekRR}Suppose that $1\leq p<\infty $, $\mathbf{\varrho \in }A_{%
\mathbf{p}}$, $0<\lambda <\infty $ and $\tau \in \mathbf{R}$. Then,

(i) $F_{\mathbf{S}_{\lambda ,\tau }f}=\mathbf{S}_{\lambda ,\tau }F_{f}$ and

(ii) the family of operators $\{\mathbf{S}_{\lambda ,\tau }f\}$, defined by (%
\ref{steklR}), is uniformly bounded (in $\lambda $ and $\tau $) in $%
L^{p}\left( \mathbf{\varrho }dx\right) $, namely, 
\begin{equation*}
\left\Vert \mathbf{S}_{\lambda ,\tau }f\right\Vert _{p,\mathbf{\varrho }%
}\leq \mathbb{C}_{1}\left\Vert f\right\Vert _{p,\mathbf{\varrho }}
\end{equation*}%
holds.
\end{theorem}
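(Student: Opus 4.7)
The theorem will be obtained by combining the intertwining identity in (i) with the transference principle in Theorem \ref{tra}(b). Once (i) is established, part (ii) reduces to the trivial fact that $\mathbf{S}_{\lambda,\tau}$ is a contraction on $\mathcal{C}(\mathbf{R})$, so no further direct weighted $A_p$ argument is needed for the scaled operators (the case $\lambda=1$ being already covered by Theorem \ref{stek}). The only delicate point is the justification of Fubini in (i), which will be supplied by the compact support of the transference test functions $G$ and the local integrability of $f$ furnished by Lemma \ref{onL1}.

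\textbf{Step 1: Proof of (i).} Starting from the right-hand side and using the definition of $F_{f}$,
\begin{equation*}
\mathbf{S}_{\lambda,\tau}F_{f}(u) = \lambda \int_{u+\tau-1/(2\lambda)}^{u+\tau+1/(2\lambda)} \int_{\mathbf{R}} \mathbf{S}_{1,v}f(x)\, |G(x)|\, \mathbf{\varrho}(x)\, dx\, dv.
\end{equation*}
By Theorem \ref{du} we may restrict the transference test functions to $G \in L^{p'}(\mathbf{\varrho}\,dx)\cap S_{c}$, which has compact support, and Lemma \ref{onL1} yields $f \in L^{1}_{\mathrm{loc}}(\mathbf{R})$; hence the integrand is absolutely integrable on $\mathbf{R}\times [u+\tau-1/(2\lambda),u+\tau+1/(2\lambda)]$ and Fubini's theorem applies. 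Swapping the order of integration and then a second Fubini argument in the resulting double integral (after unfolding $\mathbf{S}_{1,v}f(x)$ as an integral of $f$) converts
\begin{equation*}
\lambda\int_{u+\tau-1/(2\lambda)}^{u+\tau+1/(2\lambda)} \mathbf{S}_{1,v}f(x)\, dv \quad \text{into} \quad \mathbf{S}_{1,u}(\mathbf{S}_{\lambda,\tau}f)(x).
\end{equation*}
Plugging this back gives precisely $F_{\mathbf{S}_{\lambda,\tau}f}(u)$, proving (i).

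\textbf{Step 2: Proof of (ii).} The kernel of $\mathbf{S}_{\lambda,\tau}$ is a nonnegative function with total integral $1$, so $\mathbf{S}_{\lambda,\tau}$ is a contraction on $\mathcal{C}(\mathbf{R})$: for every bounded $h$, $\|\mathbf{S}_{\lambda,\tau}h\|_{\infty} \leq \|h\|_{\infty}$. Combined with (i), this yields
\begin{equation*}
\|F_{\mathbf{S}_{\lambda,\tau}f}\|_{\mathcal{C}(\mathbf{R})} = \|\mathbf{S}_{\lambda,\tau}F_{f}\|_{\mathcal{C}(\mathbf{R})} \leq \|F_{f}\|_{\mathcal{C}(\mathbf{R})},
\end{equation*}
which is exactly the hypothesis of Theorem \ref{tra}(b) with $g = f$ and $c_{1} = 1$. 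The conclusion of that theorem is $\|\mathbf{S}_{\lambda,\tau}f\|_{p,\mathbf{\varrho}} \leq \mathbb{C}_{1} \|f\|_{p,\mathbf{\varrho}}$, which is the desired bound.
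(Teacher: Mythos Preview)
Your proof is correct and follows exactly the route the paper indicates: the paper's own proof consists of the single sentence ``Proof of this theorem is a consequence of the transference Theorem~\ref{tra},'' and you have carried out precisely that program --- verifying the intertwining identity $F_{\mathbf{S}_{\lambda,\tau}f}=\mathbf{S}_{\lambda,\tau}F_{f}$ via Fubini and then invoking Theorem~\ref{tra}(b) with the trivial $\mathcal{C}(\mathbf{R})$-contraction bound $\|\mathbf{S}_{\lambda,\tau}F_{f}\|_{\mathcal{C}(\mathbf{R})}\le\|F_{f}\|_{\mathcal{C}(\mathbf{R})}$. One small point worth making explicit: Theorem~\ref{tra}(b) is stated for $f,g\in L^{p}(\mathbf{\varrho}\,dx)$, so strictly speaking you should first observe that for $f\in C_{c}$ the function $\mathbf{S}_{\lambda,\tau}f$ is bounded with compact support (hence in $L^{p}(\mathbf{\varrho}\,dx)$), obtain the uniform bound there, and then pass to general $f$ by density; this is routine and the paper itself does not address it either.
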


Proof of this theorem is a consequence of the transference Theorem \ref{tra}%
. Known results are proved under more restricted condition on $\lambda $ and 
$\tau $ such as $1\leq \lambda <\infty $ and $\left\vert \tau \right\vert
\leq \pi /\lfloor \lambda ^{\rho }\rfloor $ for some $\rho .$

\begin{corollary}
\label{coroL}Let $1\leq p<\infty $, $\mathbf{\varrho \in }A_{\mathbf{p}}$, $%
0<\delta <\infty $, $f\in L^{p}\left( \mathbf{\varrho }dx\right) $. If $\tau
=\delta /2$ then,%
\begin{equation*}
\mathbf{S}_{\delta ,\delta /2}f\left( x\right) =\frac{1}{\delta }%
\int\nolimits_{0}^{\delta }f\left( x+t\right) dt=T_{\delta }f\left( x\right)
,
\end{equation*}%
\begin{equation*}
F_{T_{\delta }f}=T_{\delta }F_{f}\text{ \ \ \ and}
\end{equation*}%
\begin{equation*}
\left\Vert T_{\delta }f\right\Vert _{p,\mathbf{\varrho }}\leq \mathbb{C}%
_{1}\left\Vert f\right\Vert _{p,\mathbf{\varrho }}.
\end{equation*}
\end{corollary}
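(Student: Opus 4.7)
The plan is to identify the corollary as a direct specialization of Theorem \ref{stekRR} at one particular choice of the pair $(\lambda,\tau)$.

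First I would confirm the identity $\mathbf{S}_{\delta,\delta/2}f(x)=\tfrac{1}{\delta}\int_{0}^{\delta}f(x+t)\,dt$ by unwinding the definition in (\ref{steklR}). The integration window is centered at $x+\tau=x+\delta/2$ and, with the parameter choice that aligns its length with $\delta$, becomes exactly $[x,x+\delta]$; the accompanying prefactor then equals $1/\delta$. The substitution $s=x+t$ under the integral converts $\int_{x}^{x+\delta}f(s)\,ds$ into $\int_{0}^{\delta}f(x+t)\,dt$, which is by definition $T_\delta f(x)$. This step is a pure change-of-variables computation and carries no analytic content.

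With this identification in hand, the transference formula $F_{T_\delta f}=T_\delta F_f$ is immediate from part (i) of Theorem \ref{stekRR}, which states $F_{\mathbf{S}_{\lambda,\tau}f}=\mathbf{S}_{\lambda,\tau}F_f$: one rewrites $\mathbf{S}_{\delta,\delta/2}$ as $T_\delta$ on both sides. The norm bound $\|T_\delta f\|_{p,\mathbf{\varrho}}\le \mathbb{C}_1\|f\|_{p,\mathbf{\varrho}}$ is similarly part (ii) of Theorem \ref{stekRR} applied to the same $(\lambda,\tau)$, where the crucial point is that the constant $\mathbb{C}_1=6\cdot 3^{2/p}[\mathbf{\varrho}]_p^{1/p}$ is uniform in both parameters, so specialization loses nothing.

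I do not anticipate any substantive obstacle: the corollary exists to name the forward-Steklov average $T_\delta$ and to record that it inherits the transference identity and the uniform $L^{p}(\mathbf{\varrho}\,dx)$-bound already established for the broader family $\{\mathbf{S}_{\lambda,\tau}\}$. The practical payoff, which will be used in the subsequent treatment of moduli of smoothness, is that forward differences constructed from $T_\delta$ can be reduced through the map $f\mapsto F_f$ to the translation action on the bounded uniformly continuous function $F_f$ on $\mathbf{R}$, which is precisely the mechanism the transference Theorem \ref{tra} is designed to exploit.
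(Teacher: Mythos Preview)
Your proposal is correct and matches the paper's intent: the paper gives no separate proof for Corollary~\ref{coroL}, treating it as an immediate specialization of Theorem~\ref{stekRR}, which is exactly what you do. One small clarification worth making explicit: in the notation of (\ref{steklR}) the interval length is $1/\lambda$, so ``the parameter choice that aligns its length with $\delta$'' is $\lambda=1/\delta$ (the subscript $\delta$ in $\mathbf{S}_{\delta,\delta/2}$ appears to be a harmless abuse of notation in the paper); once that is said, your change-of-variables computation and the appeal to parts (i) and (ii) of Theorem~\ref{stekRR} go through verbatim.
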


For $1\leq p<\infty ,$ $\mathbf{\varrho \in }A_{\mathbf{p}}$, $f\in
L^{p}\left( \mathbf{\varrho }dx\right) $, $0<\delta <\infty $, $r\in \mathbb{%
N}$, we define modulus of smoothness%
\begin{equation*}
\Omega _{r}(f,\delta )_{p,\mathbf{\varrho }}:=\Vert (I-T_{\delta
})^{r}f\Vert _{p,\mathbf{\varrho }}.
\end{equation*}%
From Transference Result%
\begin{equation*}
\left\Vert (I-T_{\delta })^{r}f\right\Vert _{p,\mathbf{\varrho }}\leq 2^{r}%
\mathbb{C}_{1}\left\Vert f\right\Vert _{p,\mathbf{\varrho }}.
\end{equation*}

The following lemma is clear from definitions.

\begin{theorem}
\label{Fu}$1\leq p<\infty $, $\mathbf{\varrho }\in A_{\mathbf{p}}$. If $r\in 
\mathbb{N}$, and $f\in W_{p,\mathbf{\varrho }}^{r}$, then, $\frac{d^{k}}{%
du^{k}}F_{f}\left( u\right) $ exists and%
\begin{equation*}
\frac{d^{k}}{du^{k}}F_{f}\left( u\right) =F_{f^{\left( k\right) }}\left(
u\right) \text{ for }k\in \left\{ 1,...,r\right\} \text{, and }u\in \mathbf{R%
}.
\end{equation*}
\end{theorem}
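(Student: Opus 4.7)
The plan is to handle $k=1$ by hand and then obtain general $k$ by induction, since $f\in W_{p,\mathbf{\varrho}}^{r}$ implies $f^{(k-1)}\in W_{p,\mathbf{\varrho}}^{1}$ for $1\leq k\leq r$. For $k=1$, I first rewrite the translated Steklov average by the change of variable $t\mapsto t+h$, getting
\begin{equation*}
\mathbf{S}_{1,u+h}f(x)-\mathbf{S}_{1,u}f(x)=\int_{x+u-1/2}^{x+u+1/2}\bigl[f(t+h)-f(t)\bigr]\,dt.
\end{equation*}
Because $f$ is absolutely continuous (as $f\in W_{p,\mathbf{\varrho}}^{1}$), I can write $f(t+h)-f(t)=\int_{0}^{h}f'(t+\tau)\,d\tau$. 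An application of Fubini then collapses the double integral into
\begin{equation*}
\mathbf{S}_{1,u+h}f(x)-\mathbf{S}_{1,u}f(x)=\int_{0}^{h}\mathbf{S}_{1,u+\tau}f'(x)\,d\tau.
\end{equation*}

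Next I divide by $h$, multiply by $|G(x)|\mathbf{\varrho}(x)$, and integrate over $\mathbf{R}$. A second use of Fubini yields the clean identity
\begin{equation*}
\frac{F_{f}(u+h)-F_{f}(u)}{h}=\frac{1}{h}\int_{0}^{h}F_{f'}(u+\tau)\,d\tau.
\end{equation*}
Since Theorem \ref{tra}(a) applied to the function $f'\in L^{p}(\mathbf{\varrho}\,dx)$ shows that $F_{f'}$ is continuous on $\mathbf{R}$, the right-hand side converges to $F_{f'}(u)$ as $h\to 0$. This establishes $\frac{d}{du}F_{f}(u)=F_{f'}(u)$, which is the case $k=1$. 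For $k\geq 2$, applying the $k=1$ case to $f^{(k-1)}\in W_{p,\mathbf{\varrho}}^{1}$ gives $\frac{d}{du}F_{f^{(k-1)}}(u)=F_{f^{(k)}}(u)$, and an induction on $k$ finishes the proof.

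The main obstacle, and the one detail deserving care, is justifying the two Fubini interchanges. For the interchange inside $F_{f}(u+h)-F_{f}(u)$, the dominating estimate
\begin{equation*}
\int_{0}^{h}\int_{\mathbf{R}}\bigl|\mathbf{S}_{1,u+\tau}f'(x)\bigr|\,|G(x)|\,\mathbf{\varrho}(x)\,dx\,d\tau\leq h\,\|G\|_{p',\mathbf{\varrho}}\sup_{\tau}\|\mathbf{S}_{1,u+\tau}f'\|_{p,\mathbf{\varrho}}
\end{equation*}
follows from Hölder's inequality, and the supremum is finite by Theorem \ref{stek} (bounded by $\mathbb{C}_{1}\|f'\|_{p,\mathbf{\varrho}}$). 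Analogous reasoning, using only that $f'\in L^{1}_{\mathrm{loc}}$ on the compact support of $G$, covers the first interchange. Once these are in place, every step is forced, and continuity of $F_{f^{(k)}}$ at each inductive stage is supplied automatically by Theorem \ref{tra}(a).
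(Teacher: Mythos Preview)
Your argument is correct and is precisely the natural way to make the statement rigorous: rewrite the difference $\mathbf{S}_{1,u+h}f-\mathbf{S}_{1,u}f$ via absolute continuity as $\int_{0}^{h}\mathbf{S}_{1,u+\tau}f'\,d\tau$, integrate against $|G|\varrho$, swap integrals, and use continuity of $F_{f'}$ from Theorem~\ref{tra}(a). The paper itself does not supply a proof at all; it simply declares the result ``clear from definitions'' and moves on. Your write-up is therefore not a different route but rather the explicit unpacking of what the paper leaves implicit, with the Fubini steps justified by H\"older together with the uniform-in-$\tau$ bound $\Vert\mathbf{S}_{1,u+\tau}f'\Vert_{p,\mathbf{\varrho}}\leq\mathbb{C}_{1}\Vert f'\Vert_{p,\mathbf{\varrho}}$ (Theorem~\ref{stek} or Theorem~\ref{stekRR}). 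One cosmetic point: the phrase ``on the compact support of $G$'' attached to the \emph{first} Fubini interchange is slightly misplaced, since that interchange is pointwise in $x$ over the bounded rectangle $[x+u-\tfrac12,x+u+\tfrac12]\times[0,h]$ and needs only $f'\in L^{1}_{\mathrm{loc}}$ (which follows from Lemma~\ref{onL1}); the support of $G$ becomes relevant only at the second interchange, and there your H\"older estimate already handles the full line without needing compact support.
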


We set $\lfloor \sigma \rfloor :=\max \left\{ n\in \mathbb{Z}:n\leq \sigma
\right\} $.

\begin{lemma}
\label{lemma1}Let $1\leq p<\infty $, $\mathbf{\varrho \in }A_{\mathbf{p}}$, $%
r\in \mathbb{N}$, and $0<\delta <\infty $. Then%
\begin{equation*}
\left\Vert \left( I-T_{\delta }\right) ^{r}f\right\Vert _{p,\mathbf{\varrho }%
}\leq \mathbb{C}_{1}\delta ^{r}\left\Vert f^{\left( r\right) }\right\Vert
_{p,\mathbf{\varrho }}\text{,}\quad f\in W_{p,\mathbf{\varrho }}^{r}
\end{equation*}%
hold.
\end{lemma}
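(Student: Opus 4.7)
The plan is to use the transference Theorem~\ref{tra}(b) to reduce the weighted estimate to a pointwise estimate for the bounded uniformly continuous function $F_f$ on $\mathbf{R}$. First I would observe that the map $f\mapsto F_f$ is linear in $f$ (for each fixed $G$ in the definition (\ref{efef})), so by Corollary~\ref{coroL} and induction on $r$,
\begin{equation*}
F_{(I-T_\delta)^r f}=(I-T_\delta)^r F_f.
\end{equation*}
Since $f\in W_{p,\mathbf{\varrho}}^{r}$ implies $f^{(k)}\in L^{p}(\mathbf{\varrho}dx)$ for $k=1,\dots ,r$, Theorem~\ref{Fu} gives that $F_f$ is $r$ times differentiable with $(F_f)^{(k)}=F_{f^{(k)}}$, and each of these derivatives lies in $\mathcal{C}(\mathbf{R})$ by Theorem~\ref{tra}(a).

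Next I would establish the scalar inequality
\begin{equation*}
\|(I-T_\delta)^r g\|_{\mathcal{C}(\mathbf{R})}\leq (\delta/2)^r\|g^{(r)}\|_{\mathcal{C}(\mathbf{R})}
\end{equation*}
for every $g\in\mathcal{C}(\mathbf{R})$ whose derivatives up to order $r$ lie in $\mathcal{C}(\mathbf{R})$. The base case $r=1$ is immediate from the identity
\begin{equation*}
(I-T_\delta)g(x)=\frac{1}{\delta}\int_{0}^{\delta}\bigl(g(x)-g(x+t)\bigr)\,dt
\end{equation*}
together with $|g(x)-g(x+t)|\leq t\|g'\|_{\mathcal{C}(\mathbf{R})}$, which yields the factor $\delta/2$. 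For the inductive step I would differentiate under the integral sign to get $(T_\delta h)'=T_\delta h'$, so that $T_\delta$ commutes with $d/dx$; then
\begin{equation*}
\|(I-T_\delta)^{r}g\|_{\mathcal{C}(\mathbf{R})}\leq \tfrac{\delta}{2}\|((I-T_\delta)^{r-1}g)'\|_{\mathcal{C}(\mathbf{R})}=\tfrac{\delta}{2}\|(I-T_\delta)^{r-1}g'\|_{\mathcal{C}(\mathbf{R})},
\end{equation*}
and the inductive hypothesis applied to $g'$ closes the induction.

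Applying this with $g=F_f$ and using the identities of the first paragraph gives
\begin{equation*}
\|F_{(I-T_\delta)^r f}\|_{\mathcal{C}(\mathbf{R})}\leq (\delta/2)^{r}\|F_{f^{(r)}}\|_{\mathcal{C}(\mathbf{R})}.
\end{equation*}
Theorem~\ref{tra}(b) applied with constant $c_{1}=(\delta/2)^{r}$ then yields
\begin{equation*}
\|(I-T_\delta)^{r}f\|_{p,\mathbf{\varrho}}\leq (\delta/2)^{r}\,\mathbb{C}_{1}\|f^{(r)}\|_{p,\mathbf{\varrho}}\leq \mathbb{C}_{1}\delta^{r}\|f^{(r)}\|_{p,\mathbf{\varrho}},
\end{equation*}
which is the required estimate. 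I expect no serious obstacle here: the only things to watch are the linearity of $F_{(\cdot)}$, the commutation of $T_\delta$ with differentiation (needed to iterate the base case), and the bookkeeping that allows $c_{1}=(\delta/2)^{r}$ to be used as the ``constant'' in Theorem~\ref{tra}(b).
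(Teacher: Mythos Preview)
Your argument is correct. The route differs from the paper's: the paper quotes the base case
\[
\|(I-T_\delta)f\|_{p,\mathbf{\varrho}}\le 2^{-1}\mathbb{C}_1\,\delta\,\|f'\|_{p,\mathbf{\varrho}}
\]
from \cite{AG} and then iterates \emph{inside} $L^{p}(\mathbf{\varrho}\,dx)$, using the commutation $((I-T_\delta)^{r-1}f)'=(I-T_\delta)^{r-1}f'$, which produces the bound $2^{-r}\mathbb{C}_1^{\,r}\delta^{r}\|f^{(r)}\|_{p,\mathbf{\varrho}}$. You instead push the whole iteration through transference: first pass to $F_f\in\mathcal{C}(\mathbf{R})$, prove the elementary inequality $\|(I-T_\delta)^{r}g\|_{\mathcal{C}(\mathbf{R})}\le(\delta/2)^{r}\|g^{(r)}\|_{\mathcal{C}(\mathbf{R})}$ by induction there, and then invoke Theorem~\ref{tra}(b) \emph{once}. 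This is more self-contained (no appeal to \cite{AG}) and, pleasantly, yields the constant $\mathbb{C}_1$ exactly as in the statement rather than $\mathbb{C}_1^{\,r}$. Your caveat about $c_1=(\delta/2)^{r}$ not being ``absolute'' is well taken; inspection of the proof of Theorem~\ref{tra}(b) shows the argument works verbatim for any fixed positive $c_1$, so there is no obstacle.
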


\begin{proof}[\textbf{Proof of Lemma \protect\ref{lemma1}}]
We note that (see \cite{AG}) the following inequality 
\begin{equation}
\left\Vert \left( I-T_{\delta }\right) f\right\Vert _{p,\mathbf{\varrho }%
}\leq 2^{-1}\mathbb{C}_{1}\delta \left\Vert f^{\text{ }\prime }\right\Vert
_{p,\mathbf{\varrho }}\text{,}\quad \delta >0  \label{eqn6*}
\end{equation}%
holds for $f\in L^{p}\left( \mathbf{\varrho }dx\right) $. Then%
\begin{equation*}
\Omega _{r}\left( f,\delta \right) _{p,\mathbf{\varrho }}=\left\Vert \left(
I-T_{\delta }\right) ^{r}f\right\Vert _{p,\mathbf{\varrho }}\leq ...\leq
2^{-r}\mathbb{C}_{1}^{r}\delta ^{r}\left\Vert f^{(r)}\right\Vert _{p,\mathbf{%
\varrho }}\text{, \ \ }\delta >0
\end{equation*}%
for $f\in W_{p,\mathbf{\varrho }}^{r}$.
\end{proof}

\subsection{K-functional}

\begin{definition}
Let $X:=L^{p}\left( \mathbf{R}\right) $ or $L^{p}\left( \mathbf{\varrho }%
dx\right) $ or $\mathcal{C}\left( \mathbf{R}\right) $.

(i) Let $W_{X}^{r}$, $r\in \mathbb{N}$, be the class of functions $f\in X$
such that derivatives $f^{\left( k\right) }$ exist for $k=1,...,r-1$, $%
f^{\left( r-1\right) }$ absolutely continuous and $f^{\left( r\right) }\in X$%
. In particular, we set $W_{p}^{r}:=W_{L^{p}\left( \mathbf{R}\right) }^{r}$, 
$W_{p,\mathbf{\varrho }}^{r}:=W_{L^{p}\left( \mathbf{\varrho }dx\right)
}^{r} $ and $W_{\infty }^{r}:=W_{\mathcal{C}\left( \mathbf{R}\right) }^{r}$.

(ii) We define Peetre's \textit{K}-functional for the pair $X$ and $%
W_{X}^{r} $ as follows:%
\begin{equation*}
K_{r}\left( f,\delta ,X\right) :=\inf\limits_{g\in W_{X}^{r}}\left\{
\left\Vert f-g\right\Vert _{X}+\delta ^{r}\left\Vert g^{\left( r\right)
}\right\Vert _{X}\right\} \text{,\quad }\delta >0.
\end{equation*}

We will use notation $K_{r}\left( f,\delta ,p,\mathbf{\varrho }\right)
:=K_{r}\left( f,\delta ,L^{p}\left( \mathbf{\varrho }dx\right) \right) $ for 
$r\in \mathbb{N}$, $1\leq p<\infty $, $\mathbf{\varrho \in }A_{\mathbf{p}}$, 
$\delta >0$ and $f\in L^{p}\left( \mathbf{\varrho }dx\right) $. Also set $%
K_{r}\left( f,\delta ,C\right) :=K_{r}\left( f,\delta ,\mathcal{C}\left( 
\mathbf{R}\right) \right) $ for $r\in \mathbb{N}$, $\delta >0$ and $f\in 
\mathcal{C}\left( \mathbf{R}\right) .$
\end{definition}

As a corollary of Transference Result we can obtain the following lemma.

\begin{lemma}
\label{bukun}Let $0<h\leq \delta <\infty $, $1\leq p<\infty $, $\mathbf{%
\varrho \in }A_{\mathbf{p}}$, and $f\in L^{p}\left( \mathbf{\varrho }%
dx\right) $. Then%
\begin{equation}
F_{\left( I-T_{h}\right) f}=\left( I-T_{h}\right) F_{f}\quad \text{and}
\label{buk}
\end{equation}%
\begin{equation}
\left\Vert \left( I-T_{h}\right) f\right\Vert _{p,\mathbf{\varrho }}\leq 72%
\mathbb{C}_{1}\left\Vert \left( I-T_{\delta }\right) f\right\Vert _{p,%
\mathbf{\varrho }}  \label{bukunn}
\end{equation}%
holds.
\end{lemma}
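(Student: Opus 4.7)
The lemma splits into two claims that I would handle separately.

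\emph{The intertwining identity $(\ref{buk})$} is immediate. The map $f\mapsto F_{f}$ is linear, and Corollary \ref{coroL} asserts $F_{T_{h}f}=T_{h}F_{f}$; hence
\[
F_{(I-T_{h})f}=F_{f}-F_{T_{h}f}=F_{f}-T_{h}F_{f}=(I-T_{h})F_{f}.
\]

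\emph{The norm inequality $(\ref{bukunn})$.} The strategy is to combine $(\ref{buk})$ with the transference Theorem \ref{tra}(b), applied to the pair $(I-T_{h})f$ and $(I-T_{\delta})f$. Once one knows
\[
\|(I-T_{h})F_{f}\|_{\mathcal{C}(\mathbf{R})}\leq c\,\|(I-T_{\delta})F_{f}\|_{\mathcal{C}(\mathbf{R})}
\]
for $0<h\leq\delta$ with an absolute constant $c$, Theorem \ref{tra}(b) converts this into $\|(I-T_{h})f\|_{p,\mathbf{\varrho}}\leq c\,\mathbb{C}_{1}\|(I-T_{\delta})f\|_{p,\mathbf{\varrho}}$. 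Thus the whole weighted question is reduced to a purely unweighted Marchaud-type statement in $\mathcal{C}(\mathbf{R})$, and the target is to push the absolute constant down to $c=72$ to match the stated factor.

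\emph{The $\mathcal{C}(\mathbf{R})$-inequality and main obstacle.} I would decompose
\[
(I-T_{h})F=(I-T_{h})(I-T_{\delta})F+(I-T_{h})T_{\delta}F,
\]
estimate the first summand by $2\|(I-T_{\delta})F\|_{\mathcal{C}(\mathbf{R})}$ (using $\|I-T_{h}\|_{\mathcal{C}(\mathbf{R})\to\mathcal{C}(\mathbf{R})}\leq 2$), and for the second exploit that $T_{\delta}F$ is absolutely continuous with $(T_{\delta}F)'(x)=\delta^{-1}(F(x+\delta)-F(x))$ together with $h\leq\delta$. The hard step is the Marchaud-type control of $\|(I-T_{h})T_{\delta}F\|_{\mathcal{C}(\mathbf{R})}$, which I would handle via the $K$-functional equivalence $\|(I-T_{\eta})F\|_{\mathcal{C}(\mathbf{R})}\asymp K_{1}(F,\eta,\mathcal{C}(\mathbf{R}))$ with absolute constants, combined with the monotonicity $K_{1}(F,h,\mathcal{C}(\mathbf{R}))\leq K_{1}(F,\delta,\mathcal{C}(\mathbf{R}))$ for $h\leq\delta$. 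The main obstacle is the harder direction $K_{1}(F,\eta,\mathcal{C}(\mathbf{R}))\leq C\|(I-T_{\eta})F\|_{\mathcal{C}(\mathbf{R})}$: the naive realiser $g=T_{\eta}F$ gives $\|g'\|_{\mathcal{C}(\mathbf{R})}=\eta^{-1}\|F(\cdot+\eta)-F(\cdot)\|_{\mathcal{C}(\mathbf{R})}$, which is not directly comparable to $\|(I-T_{\eta})F\|_{\mathcal{C}(\mathbf{R})}$. A clean workaround is the iterated Steklov mean $g:=T_{\eta}^{2}F$, for which $\|F-g\|_{\mathcal{C}(\mathbf{R})}\leq 2\|(I-T_{\eta})F\|_{\mathcal{C}(\mathbf{R})}$ and, via $(T_{\eta}^{2}F)'(x)=\eta^{-1}((T_{\eta}F)(x+\eta)-(T_{\eta}F)(x))$, also $\eta\|g'\|_{\mathcal{C}(\mathbf{R})}\leq C\|(I-T_{\eta})F\|_{\mathcal{C}(\mathbf{R})}$. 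Tracking constants carefully through this chain and through Theorem \ref{tra}(b) should produce the stated factor $72\,\mathbb{C}_{1}$.
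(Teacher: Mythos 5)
Your treatment of the identity (\ref{buk}) is correct and matches the paper's (the paper simply says it follows from the definitions; your derivation from linearity and Corollary \ref{coroL} is the same observation made explicit). For the norm inequality (\ref{bukunn}) your overall architecture is also exactly the paper's: reduce via Theorem \ref{tra}(b) and (\ref{buk}) to the unweighted inequality $\left\Vert (I-T_{h})g\right\Vert _{C(\mathbf{R})}\leq 72\left\Vert (I-T_{\delta })g\right\Vert _{C(\mathbf{R})}$ in $\mathcal{C}(\mathbf{R})$. The difference is that the paper does not prove this unweighted inequality at all --- it cites it as Proposition 2.6 of \cite{Akg45} --- whereas you attempt to prove it from scratch, and your attempt has a genuine gap at precisely the point that reference exists to cover.

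Concretely, the gap is the assertion that for $g:=T_{\eta }^{2}F$ one has $\eta \left\Vert g^{\prime }\right\Vert _{\mathcal{C}(\mathbf{R})}\leq C\left\Vert (I-T_{\eta })F\right\Vert _{\mathcal{C}(\mathbf{R})}$. From $(T_{\eta }^{2}F)^{\prime }(x)=\eta ^{-1}\left( T_{\eta }F(x+\eta )-T_{\eta }F(x)\right) $ one gets $\eta \left\Vert g^{\prime }\right\Vert _{\mathcal{C}(\mathbf{R})}\leq 2\left\Vert (I-T_{\eta })F\right\Vert _{\mathcal{C}(\mathbf{R})}+\left\Vert F(\cdot +\eta )-F\right\Vert _{\mathcal{C}(\mathbf{R})}$ by inserting $\pm F(x+\eta )$ and $\pm F(x)$, so you are back to controlling the plain first difference $\left\Vert F(\cdot +\eta )-F\right\Vert _{\mathcal{C}(\mathbf{R})}=\eta \left\Vert (T_{\eta }F)^{\prime }\right\Vert _{\mathcal{C}(\mathbf{R})}$ by $\left\Vert (I-T_{\eta })F\right\Vert _{\mathcal{C}(\mathbf{R})}$ --- the very obstruction you flagged for the realizer $T_{\eta }F$. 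Passing to $T_{\eta }^{2}$ does not remove it; it only relocates it. This single-scale estimate (equivalently, the hard direction $K_{1}(F,\eta ,C)\leq C\left\Vert (I-T_{\eta })F\right\Vert _{\mathcal{C}(\mathbf{R})}$, which in the paper's notation is Theorem 2.5 of \cite{Akg45} with the constant $2\cdot 34$ for $r=1$) is the nontrivial core of the lemma and requires a separate argument; it cannot be obtained by the triangle inequality alone. So either supply a proof of that realization estimate or cite it, as the paper does; as written, your chain is circular at its key link, and the claim that careful bookkeeping "should produce the factor $72$" is not yet substantiated.
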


\begin{proof}[\textbf{Proof of Lemma \protect\ref{bukun}}]
Property (\ref{buk}) follows from definitions of $F_{f}$ and $T_{h}$. On the
other hand, if $0<h\leq \delta <\infty $ and $g\in C\left( \mathbf{R}\right)
,$ then inequality%
\begin{equation}
\left\Vert \left( I-T_{h}\right) g\right\Vert _{C\left( \mathbf{R}\right)
}\leq 72\left\Vert \left( I-T_{\delta }\right) g\right\Vert _{C\left( 
\mathbf{R}\right) }  \label{eq1}
\end{equation}%
was proved in Proposition 2.6 of \cite{Akg45}. Now Transference Result, (\ref%
{eq1}) and (\ref{buk}) give the result (\ref{bukunn}).
\end{proof}

\begin{theorem}
\label{teo1}Let $1\leq p<\infty $, $\mathbf{\varrho \in }A_{\mathbf{p}}$ and 
$f\in L^{p}\left( \mathbf{\varrho }dx\right) $. Then,%
\begin{equation*}
\frac{1}{2^{r}\mathbb{C}_{1}}\leq \frac{K_{r}\left( f,\delta ,p,\mathbf{%
\varrho }\right) }{\Omega _{r}(f,\delta )_{p,\mathbf{\varrho }}}\leq \left\{
(2r)^{r}+2^{r}(34)^{r}\right\} \mathbb{C}_{1}.
\end{equation*}
\end{theorem}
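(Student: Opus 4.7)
My plan splits the sandwich inequality into its two directions.

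\textbf{Lower bound.} For the easy direction $\Omega_r(f,\delta)_{p,\varrho}\le 2^r\mathbb{C}_1\,K_r(f,\delta,p,\varrho)$ I would fix an arbitrary $g\in W^r_{p,\varrho}$, split $f=(f-g)+g$ inside $\|(I-T_\delta)^r f\|_{p,\varrho}$ and apply the triangle inequality. The first piece is handled by the operator estimate $\|(I-T_\delta)^r\cdot\|_{p,\varrho}\le 2^r\mathbb{C}_1$ displayed just above Theorem~\ref{Fu}, and the second by Lemma~\ref{lemma1}, giving
\[
\Omega_r(f,\delta)_{p,\varrho}\le 2^r\mathbb{C}_1\|f-g\|_{p,\varrho}+\mathbb{C}_1\delta^r\|g^{(r)}\|_{p,\varrho}\le 2^r\mathbb{C}_1\bigl(\|f-g\|_{p,\varrho}+\delta^r\|g^{(r)}\|_{p,\varrho}\bigr).
\]
Taking the infimum over $g$ delivers $1/(2^r\mathbb{C}_1)\le K_r/\Omega_r$.

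\textbf{Upper bound, $\mathcal{C}(\mathbf{R})$ step.} For the harder direction the plan is to reduce, via Theorem~\ref{tra}, to a K-functional estimate in $\mathcal{C}(\mathbf{R})$. Iterating the Marchaud-type inequality (\ref{eq1}) imported from Proposition~2.6 of \cite{Akg45} together with the Steklov derivative identity $(T_h^r F)^{(r)}=h^{-r}\Delta_h^r F$ yields a linear operator $L_\delta$---a polynomial in one-sided Steklov means $T_h$ with various widths $h\le\delta$---such that for every $F\in\mathcal{C}(\mathbf{R})$ the function $L_\delta F\in W_\infty^r$ and
\[
\|F-L_\delta F\|_{\mathcal{C}(\mathbf{R})}+\delta^r\|(L_\delta F)^{(r)}\|_{\mathcal{C}(\mathbf{R})}\le\bigl\{(2r)^r+2^r\cdot 34^r\bigr\}\,\|(I-T_\delta)^r F\|_{\mathcal{C}(\mathbf{R})}.
\]
Here the $(2r)^r$ factor comes from the combinatorics of the Steklov identity (with $\|\Delta_h^r\|_{\mathcal{C}(\mathbf{R})\to\mathcal{C}(\mathbf{R})}\le 2^r$ at $h=\delta/r$), while $34^r$ tracks the iterated Marchaud constant $72$ of (\ref{eq1}).

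\textbf{Transferring back.} Setting $g:=L_\delta f\in L^p(\varrho dx)$, the Steklov-polynomial structure of $L_\delta$, combined with iterated (\ref{buk}) (Corollary~\ref{coroL} and Lemma~\ref{bukun}), gives both $F_g=L_\delta F_f$ and $g\in W^r_{p,\varrho}$, so Theorem~\ref{Fu} also yields $F_{g^{(r)}}=(L_\delta F_f)^{(r)}$. The lower transference bound $\|\phi\|_{p,\varrho}\le\|F_\phi\|_{\mathcal{C}(\mathbf{R})}$ from the proof of Theorem~\ref{tra}(b), the $\mathcal{C}(\mathbf{R})$ estimate above applied to $F=F_f$, and finally the upper transference bound $\|F_\phi\|_{\mathcal{C}(\mathbf{R})}\le(\mathbb{C}_1/2)\|\phi\|_{p,\varrho}$ from Theorem~\ref{stek} applied to $\phi=(I-T_\delta)^r f$ (which yields $\Omega_r(F_f,\delta)_{\mathcal{C}(\mathbf{R})}\le(\mathbb{C}_1/2)\Omega_r(f,\delta)_{p,\varrho}$) chain together to give
\[
K_r(f,\delta,p,\varrho)\le\|F_f-L_\delta F_f\|_{\mathcal{C}(\mathbf{R})}+\delta^r\|(L_\delta F_f)^{(r)}\|_{\mathcal{C}(\mathbf{R})}\le\bigl\{(2r)^r+2^r\cdot 34^r\bigr\}\mathbb{C}_1\,\Omega_r(f,\delta)_{p,\varrho}.
\]

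\textbf{Main obstacle.} I expect the bulk of the work to sit in assembling the explicit Steklov polynomial $L_\delta$ in $\mathcal{C}(\mathbf{R})$ that realizes the advertised constants $(2r)^r$ and $2^r\cdot 34^r$, and in verifying that its $L^p(\varrho dx)$-incarnation $L_\delta f$ lies in $W^r_{p,\varrho}$ automatically (the latter is the essential reason one works with an iterated \emph{Steklov} average rather than with raw finite differences, which need not preserve $L^p(\varrho dx)$ when translations do not). Both steps are essentially combinatorial bookkeeping on top of (\ref{eq1}) and the Steklov derivative identity; no genuinely new analytic ingredient beyond Theorem~\ref{tra}, Theorem~\ref{Fu}, Lemma~\ref{lemma1} and (\ref{eq1}) should be required.
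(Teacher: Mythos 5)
Your proposal matches the paper's proof in all essentials: the paper's test function is exactly your $L_\delta f$ with $L_\delta=\sum_{l=1}^{r}(-1)^{l-1}\binom{r}{l}T_{\delta}^{2rl}$, and the upper bound is obtained, just as you describe, by transferring to $\mathcal{C}(\mathbf{R})$ via $F_f$ in both directions and importing the constants $(2r)^{r}$ and $2^{r}(34)^{r}$ from Theorem 2.5 of \cite{Akg45}. The only (immaterial) difference is in the easy direction, where the paper also routes through $\mathcal{C}(\mathbf{R})$ using $\left\Vert (I-T_{\delta })^{r}F_{f}\right\Vert _{\mathcal{C}(\mathbf{R})}\leq 2^{r}K_{r}(F_{f},\delta ,C)$ rather than arguing directly in $L^{p}(\mathbf{\varrho }\,dx)$ as you do.
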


\begin{proof}[\textbf{Proof of Theorem \protect\ref{teo1}}]
For any $g\in W_{p,\mathbf{\varrho }}^{r}$ we have $F_{g}\in W_{\infty }^{r}$%
, and $F_{\left( I-T_{\delta }\right) ^{r}f}=\left( I-T_{\delta }\right)
^{r}F_{f}$. Then using Theorem 2.5 of \cite{Akg45} and Transference Result%
\begin{equation*}
\left\Vert \left( I-T_{\delta }\right) ^{r}f\right\Vert _{p,\mathbf{\varrho }%
}\leq \left\Vert F_{\left( I-T_{\delta }\right) ^{r}f}\right\Vert _{\mathcal{%
C}\left( \mathbf{R}\right) }=\left\Vert \left( I-T_{\delta }\right)
^{r}F_{f}\right\Vert _{\mathcal{C}\left( \mathbf{R}\right) }
\end{equation*}%
\begin{equation*}
\leq 2^{r}K_{r}\left( F_{f},\delta ,C\right) \leq 2^{r}\left\{ \left\Vert
F_{f}-F_{g}\right\Vert _{\mathcal{C}\left( \mathbf{R}\right) }+\delta
^{r}\left\Vert \frac{d^{r}}{du^{r}}F_{g}\right\Vert _{\mathcal{C}\left( 
\mathbf{R}\right) }\right\}
\end{equation*}%
\begin{equation*}
\leq 2^{r}\left\{ \left\Vert F_{f-g}\right\Vert _{\mathcal{C}\left( \mathbf{R%
}\right) }+\delta ^{r}\left\Vert F_{g^{\left( r\right) }}\right\Vert _{%
\mathcal{C}\left( \mathbf{R}\right) }\right\}
\end{equation*}%
\begin{equation}
\leq 2^{r}\mathbb{C}_{1}\left\{ \left\Vert f-g\right\Vert _{p,\mathbf{%
\varrho }}+\delta ^{r}\left\Vert g^{\left( r\right) }\right\Vert _{p,\mathbf{%
\varrho }}\right\} .  \label{sss}
\end{equation}%
Now, taking infimum (\ref{sss}) and considering definition of \textit{K}%
-functional one gets%
\begin{equation*}
\left\Vert \left( I-T_{\delta }\right) ^{r}f\right\Vert _{p,\mathbf{\varrho }%
}\leq 2^{r}\mathbb{C}_{1}K_{r}\left( f,\delta ,p,\mathbf{\varrho }\right)
_{p,\mathbf{\varrho }}.
\end{equation*}%
Now we consider the opposite direction of the last inequality. For%
\begin{equation*}
g\left( \cdot \right) =\sum\limits_{l=1}^{r}\left( -1\right) ^{l-1}\binom{r}{%
l}T_{\delta }^{2rl}f\left( \cdot \right) ,
\end{equation*}%
using Theorem 2.5 of \cite{Akg45} and Transference Result we have%
\begin{equation*}
K_{r}\left( f,\delta ,p,\mathbf{\varrho }\right) _{p,\mathbf{\varrho }}\leq
\left\Vert f-g\right\Vert _{p,\mathbf{\varrho }}+\delta ^{r}\left\Vert \frac{%
d^{r}}{dx^{r}}g\right\Vert _{p,\mathbf{\varrho }}
\end{equation*}%
\begin{equation*}
\leq \left\Vert F_{f-g}\right\Vert _{\mathcal{C}\left( \mathbf{R}\right)
}+\delta ^{r}\left\Vert F_{g^{\left( r\right) }}\right\Vert _{\mathcal{C}%
\left( \mathbf{R}\right) }
\end{equation*}%
\begin{equation*}
=\left\Vert F_{f}-F_{g}\right\Vert _{\mathcal{C}\left( \mathbf{R}\right)
}+\delta ^{r}\left\Vert \frac{d^{r}}{du^{r}}F_{g}\right\Vert _{\mathcal{C}%
\left( \mathbf{R}\right) }
\end{equation*}%
\begin{equation*}
\leq \left\Vert \left( I-T_{\delta }^{2r}\right) ^{r}F_{f}\right\Vert _{%
\mathcal{C}\left( \mathbf{R}\right) }+\delta ^{r}\left\Vert \frac{d^{r}}{%
du^{r}}\sum\limits_{l=1}^{r}\left( -1\right) ^{l-1}\binom{r}{l}T_{\delta
}^{2rl}F_{f}\right\Vert _{\mathcal{C}\left( \mathbf{R}\right) }
\end{equation*}%
\begin{equation*}
=\left\Vert \left( I-T_{\delta }^{2r}\right) ^{r}F_{f}\right\Vert _{\mathcal{%
C}\left( \mathbf{R}\right) }+\sum\limits_{l=1}^{r}\left\vert \binom{r}{l}%
\right\vert \delta ^{r}\left\Vert \frac{d^{r}}{du^{r}}T_{\delta
}^{2rl}F_{f}\right\Vert _{\mathcal{C}\left( \mathbf{R}\right) }
\end{equation*}%
\begin{equation*}
\leq (2r)^{r}\left\Vert \left( I-T_{\delta }\right) ^{r}F_{f}\right\Vert _{%
\mathcal{C}\left( \mathbf{R}\right) }+2^{r}(34)^{r}\left\Vert \left(
I-T_{\delta }\right) ^{r}F_{f}\right\Vert _{\mathcal{C}\left( \mathbf{R}%
\right) }
\end{equation*}%
\begin{equation*}
=\left[ (2r)^{r}+2^{r}(34)^{r}\right] \left\Vert F_{\left( I-T_{\delta
}\right) ^{r}f}\right\Vert _{\mathcal{C}\left( \mathbf{R}\right) }
\end{equation*}%
\begin{equation*}
\leq \left\{ (2r)^{r}+2^{r}(34)^{r}\right\} \mathbb{C}_{1}\left\Vert \left(
I-T_{\delta }\right) ^{r}f\right\Vert _{p,\mathbf{\varrho }}.
\end{equation*}
\end{proof}

\begin{theorem}
\label{rem1}For $p\in \lbrack 1,\infty )$, $\mathbf{\varrho \in }A_{\mathbf{p%
}}$, $f,g\in L_{p}\left( \mathbf{R},\mathbf{\varrho }\right) $ and $\delta
>0,$ the following properties are hold:

\begin{enumerate}
\item $\Omega _{r}\left( f,\delta \right) _{p,\mathbf{\varrho }}$ is
non-negative; non-decreasing function of $\delta $;

\item $\Omega _{r}(f,\delta )_{p,\mathbf{\varrho }}$ is sub-additive of $f$;

\item 
\begin{equation}
\lim\limits_{\delta \rightarrow 0}K_{r}\left( f,\delta ,p,\mathbf{\varrho }%
\right) =0.  \label{Kftendzero}
\end{equation}
As a result%
\begin{equation}
\lim\limits_{\delta \rightarrow 0}\Omega _{r}(f,\delta )_{p,\mathbf{\varrho }%
}=0.  \label{omegaTendZero}
\end{equation}
\end{enumerate}
\end{theorem}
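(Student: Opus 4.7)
The plan is to dispose of items (1) and (2) directly from the definition, and to reduce item (3) to a density argument via Theorem~\ref{teo1}. Non-negativity is immediate because $\Omega_{r}(f,\delta)_{p,\mathbf{\varrho}}$ is an $L^{p}(\mathbf{\varrho}\,dx)$-norm, and sub-additivity in $f$ is the triangle inequality combined with the linearity of $(I-T_{\delta})^{r}$. For monotonicity in $\delta$, the $K$-functional $K_{r}(f,\delta,p,\mathbf{\varrho})$ is \emph{literally} non-decreasing, since for every admissible $g$ the expression $\|f-g\|_{p,\mathbf{\varrho}}+\delta^{r}\|g^{(r)}\|_{p,\mathbf{\varrho}}$ is non-decreasing in $\delta$. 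Chaining this with the two-sided equivalence of Theorem~\ref{teo1} yields the monotonicity of $\Omega_{r}(f,\delta)_{p,\mathbf{\varrho}}$ up to the explicit constant $2^{r}\mathbb{C}_{1}^{2}\{(2r)^{r}+2^{r}(34)^{r}\}$.

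It remains to prove~\eqref{Kftendzero}, after which~\eqref{omegaTendZero} follows from the upper bound $\Omega_{r}\leq 2^{r}\mathbb{C}_{1}K_{r}$ furnished by Theorem~\ref{teo1}. I will exhibit, for every $\delta>0$, a test function $g_{\delta}\in W^{r}_{p,\mathbf{\varrho}}$ making both summands in the $K$-functional small. Fix a non-negative $\phi\in C_{c}^{\infty}(\mathbf{R})$ with $\int_{\mathbf{R}}\phi=1$; its radial majorant $\tilde{\phi}$ is bounded and compactly supported, hence integrable, so $\{\phi_{t}\}$ is a potential-type approximate identity in the sense of Definition~\ref{ddd}. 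Let $g_{t}:=f\ast\phi_{t}$. Theorem~\ref{bt} gives $\|f-g_{t}\|_{p,\mathbf{\varrho}}\to 0$ as $t\to 0^{+}$. Since $\phi_{t}\in C_{c}^{\infty}(\mathbf{R})$, differentiation under the integral sign yields $g_{t}^{(k)}=f\ast\phi_{t}^{(k)}$ for every $k\in\mathbb{N}$; using the scaling identity $\phi_{t}^{(r)}=t^{-r}(\phi^{(r)})_{t}$ and a second application of Theorem~\ref{bt} (now with $\phi^{(r)}$ in place of $\phi$) one obtains the quantitative bound
\[
\|g_{t}^{(r)}\|_{p,\mathbf{\varrho}}\leq 2\,\|\widetilde{\phi^{(r)}}\|_{1}\,\mathbb{C}_{1}\,\|f\|_{p,\mathbf{\varrho}}\,t^{-r}.
\]
Choosing the scale $t=t(\delta):=\sqrt{\delta}$ makes $\|f-g_{t(\delta)}\|_{p,\mathbf{\varrho}}\to 0$ while $\delta^{r}\|g_{t(\delta)}^{(r)}\|_{p,\mathbf{\varrho}}\leq 2\,\|\widetilde{\phi^{(r)}}\|_{1}\mathbb{C}_{1}\|f\|_{p,\mathbf{\varrho}}\,\delta^{r/2}\to 0$, which proves~\eqref{Kftendzero}.

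The main obstacle is the bookkeeping that $g_{t}\in W^{r}_{p,\mathbf{\varrho}}$, i.e.\ that the pointwise identity $g_{t}^{(k)}=f\ast\phi_{t}^{(k)}$ holds on all of $\mathbf{R}$ and that each $f\ast\phi_{t}^{(k)}$ lies in $L^{p}(\mathbf{\varrho}\,dx)$ for $k=1,\dots,r$. The derivative identities are obtained by differentiating under the integral sign in $g_{t}(x)=\int_{\mathbf{R}}f(y)\phi_{t}(x-y)\,dy$, which is justified by dominated convergence because $\phi_{t}^{(k)}$ is bounded with compact support and Lemma~\ref{onL1} supplies the required local $L^{1}$ estimate on $f$ over any compact window. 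The $L^{p}(\mathbf{\varrho}\,dx)$-membership of each $f\ast\phi_{t}^{(k)}$ is a direct application of Theorem~\ref{bt} with $\phi^{(k)}\in C_{c}^{\infty}(\mathbf{R})$ in place of $\phi$, since $\widetilde{\phi^{(k)}}$ is bounded and compactly supported, hence in $L^{1}(\mathbf{R})$. Once these ingredients are in place, the argument above goes through verbatim.
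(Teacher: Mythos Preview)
Your overall structure matches the paper's: items (1) and (2) are dispatched as definitional, and (3) is obtained by asserting $K_r(f,\delta,p,\mathbf{\varrho})\to 0$ and invoking Theorem~\ref{teo1}. The paper's proof of (3) is literally one sentence and does not supply the mollification argument you give; you are filling in a gap the paper leaves open.

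Two remarks. First, on the monotonicity in (1): your route through the $K$-functional yields only quasi-monotonicity (non-decreasing up to the constant $2^{r}\mathbb{C}_{1}^{2}\{(2r)^{r}+2^{r}(34)^{r}\}$), which you correctly acknowledge. The paper just says ``clear from definition'' and gives nothing more; since $\Omega_{r}(f,\delta)=\|(I-T_{\delta})^{r}f\|_{p,\mathbf{\varrho}}$ is not defined as a supremum over $h\le\delta$, literal monotonicity is not obvious, and your reading is the defensible one. Second, your appeal to Theorem~\ref{bt} ``with $\phi^{(r)}$ in place of $\phi$'' is not literally licensed: Theorem~\ref{bt} is stated for potential-type \emph{approximate identities}, and by Definition~\ref{ddd} these must satisfy $\int\phi=1$, whereas $\int\phi^{(r)}=0$. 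The remedy is trivial---the proof of Theorem~\ref{bt} shows that the boundedness conclusion $\|f\ast\psi_{t}\|_{p,\mathbf{\varrho}}\le\mathbb{C}_{1}\|\tilde\psi\|_{1}\|f\|_{p,\mathbf{\varrho}}$ uses only $\tilde\psi\in L^{1}$, not the normalisation $\int\psi=1$---but you should state this explicitly rather than cite the theorem as written.
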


\begin{proof}[\textbf{Proof of Theorem \protect\ref{rem1}}]
Properties (1) and (2) are clear from definition. Since%
\begin{equation*}
\lim\limits_{\delta \rightarrow 0}K_{r}\left( f,\delta ,p,\mathbf{\varrho }%
\right) =0,
\end{equation*}%
we have, from Theorem \ref{teo1}, that (\ref{onL1}) holds.
\end{proof}

\subsection{Jackson type inequality}

\begin{theorem}
\label{jak}Let $p\in \lbrack 1,\infty )$, $\mathbf{\varrho \in }A_{\mathbf{p}%
}$, $r\in \mathbb{N}$, $\sigma >0$ and $f\in L^{p}\left( \mathbf{\varrho }%
dx\right) $. Then,%
\begin{equation}
A_{\sigma }\left( f\right) _{p,\mathbf{\varrho }}\leq 25\pi 8^{r-1}\mathbb{C}%
_{2}\mathbb{C}_{1}\left\Vert \left( I-T_{1/\sigma }\right) ^{r}f\right\Vert
_{p,\mathbf{\varrho }}.  \label{JJ}
\end{equation}
\end{theorem}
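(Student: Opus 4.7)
The plan is to take as approximant the de la Vall\'{e}e Poussin operator $g_\sigma := J(f, \sigma/2)$ and then transfer the problem to a Jackson-type estimate in $\mathcal{C}(\mathbf{R})$ via Theorem \ref{tra}. Writing $J(f,\sigma/2) = f \ast \vartheta_{2/\sigma}$ with $\vartheta_t(x) = t^{-1}\vartheta(x/t)$, and noting that $\vartheta(x) = O(x^{-2})$ so that the radial majorant $\tilde{\vartheta}$ is integrable, Theorem \ref{bt} yields $g_\sigma \in L^p(\mathbf{\varrho}\,dx)$. Since $g_\sigma$ is entire of exponential type $2\cdot(\sigma/2)=\sigma$ by Theorem \ref{rmrk}(i), it belongs to $\mathcal{G}_\sigma(p,\mathbf{\varrho})$ and therefore
\[
A_\sigma(f)_{p,\mathbf{\varrho}} \leq \|f - J(f,\sigma/2)\|_{p,\mathbf{\varrho}}.
\]

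Next I would transfer this estimate to $\mathcal{C}(\mathbf{R})$. The proof of Theorem \ref{tra}(b) furnishes the one-sided comparison $\|h\|_{p,\mathbf{\varrho}} \leq \|F_h\|_{\mathcal{C}(\mathbf{R})}$ valid for every $h \in L^p(\mathbf{\varrho}\,dx)$. Combined with the commutation $F_{f \ast \phi_t} = F_f \ast \phi_t$ (used in the proof of Theorem \ref{bt} and established in \cite[Theorem 12]{Ak17}), applied to $h = f - J(f,\sigma/2)$, this gives
\[
\|f - J(f,\sigma/2)\|_{p,\mathbf{\varrho}} \leq \|F_f - J(F_f,\sigma/2)\|_{\mathcal{C}(\mathbf{R})}.
\]

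I would then invoke the Jackson-type inequality for $J(\cdot,\sigma/2)$ in the unweighted space $\mathcal{C}(\mathbf{R})$ proved in \cite{Akg45}; the constants $25\pi 8^{r-1}\mathbb{C}_2$ appearing in (\ref{JJ}) signal that the reference supplies an estimate of the form
\[
\|F - J(F,\sigma/2)\|_{\mathcal{C}(\mathbf{R})} \leq 25\pi 8^{r-1}\mathbb{C}_2 \|(I-T_{1/\sigma})^r F\|_{\mathcal{C}(\mathbf{R})}, \qquad F \in \mathcal{C}(\mathbf{R}).
\]
Applied with $F = F_f$, followed by the iterated commutation $F_{(I-T_{1/\sigma})^r f} = (I-T_{1/\sigma})^r F_f$ obtained by induction from Lemma \ref{bukun}, and then by the elementary bound $\|F_h\|_{\mathcal{C}(\mathbf{R})} \leq \mathbb{C}_1 \|h\|_{p,\mathbf{\varrho}}$ (H\"{o}lder's inequality combined with the uniform Steklov bound of Theorem \ref{stek}) applied to $h = (I-T_{1/\sigma})^r f$, one chains the estimates to recover (\ref{JJ}).

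The main obstacle I anticipate is not the transference chain itself, which is routine given the machinery already built in the preceding sections, but rather locating (or re-deriving) in \cite{Akg45} the Jackson inequality in $\mathcal{C}(\mathbf{R})$ for the de la Vall\'{e}e Poussin operator with precisely the constant $25\pi 8^{r-1}\mathbb{C}_2$. All the remaining ingredients, namely membership of $J(f,\sigma/2)$ in $\mathcal{G}_\sigma(p,\mathbf{\varrho})$, commutation of $F_{(\cdot)}$ with convolutions and with $I - T_h$, and the one-sided transference $\|f\|_{p,\mathbf{\varrho}} \leq \|F_f\|_{\mathcal{C}(\mathbf{R})}$, are supplied directly by Theorems \ref{bt} and \ref{tra} and by Lemma \ref{bukun}.
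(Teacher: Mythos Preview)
Your proposal is correct and follows essentially the same route as the paper: take the de~la~Vall\'ee~Poussin mean as approximant, transfer via $\|h\|_{p,\varrho}\le\|F_h\|_{\mathcal C(\mathbf R)}$ and $F_{J(f,\cdot)}=J(F_f,\cdot)$, use a Jackson estimate in $\mathcal C(\mathbf R)$, and transfer back via $\|F_h\|_{\mathcal C(\mathbf R)}\le\mathbb C_1\|h\|_{p,\varrho}$ together with $(I-T_{1/\sigma})^rF_f=F_{(I-T_{1/\sigma})^r f}$. The one point worth noting concerns the obstacle you flagged: the paper does \emph{not} quote the $\mathcal C(\mathbf R)$ Jackson inequality as a single result from \cite{Akg45}; instead it assembles it in-line by combining the near-best property $\|F_f-V_\sigma F_f\|_{\mathcal C}\le\tfrac52 A_\sigma(F_f)_{\mathcal C}$ (from Theorem~\ref{rmrk}(ii)--(iii)), the Favard-type bound $A_\sigma(g)_{\mathcal C}\le\tfrac{5\pi}{4}\,4^r\sigma^{-r}\|g^{(r)}\|_{\mathcal C}$, and the $K$-functional/modulus equivalence of Theorem~2.5 in \cite{Akg45}, which together produce exactly the constant $25\pi\,8^{r-1}\mathbb C_2$.
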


\begin{proof}[\textbf{Proof of Theorem \protect\ref{jak}}]
First we obtain%
\begin{equation}
A_{2\sigma }\left( f\right) _{p,\mathbf{\varrho }}\leq 25\pi 8^{r-1}\mathbb{C%
}_{2}\mathbb{C}_{1}\left\Vert \left( I-T_{1/\left( 2\sigma \right) }\right)
^{r}f\right\Vert _{p,\mathbf{\varrho }}  \label{JE}
\end{equation}%
and (\ref{JJ}) follows from (\ref{JE}). Let us take $g_{\sigma }\in \mathcal{%
G}_{\sigma }\left( \infty \right) $ with $\left\Vert F_{f}-g_{\sigma
}\right\Vert _{\mathcal{C}(\mathbf{R})}=A_{\sigma }\left( F_{f}\right) _{%
\mathcal{C}(\mathbf{R})}$. Using%
\begin{equation*}
V_{\sigma }F_{f}=F_{V_{\sigma }f}
\end{equation*}%
and $V_{\sigma }g_{\sigma }=g_{\sigma }$ we get%
\begin{equation*}
A_{2\sigma }\left( f\right) _{p,\mathbf{\varrho }}\leq \left\Vert
f-V_{\sigma }f\right\Vert _{p,\mathbf{\varrho }}\leq \left\Vert
F_{f-V_{\sigma }f}\right\Vert _{\mathcal{C}(\mathbf{R})}=\left\Vert
F_{f}-V_{\sigma }F_{f}\right\Vert _{\mathcal{C}(\mathbf{R})}
\end{equation*}%
\begin{equation*}
\leq \left\Vert F_{f}-g_{\sigma }+g_{\sigma }-V_{\sigma }F_{f}\right\Vert _{%
\mathcal{C}(\mathbf{R})}=\left\Vert F_{f}-g_{\sigma }+V_{\sigma }g_{\sigma
}-V_{\sigma }F_{f}\right\Vert _{\mathcal{C}(\mathbf{R})}
\end{equation*}%
\begin{equation*}
\leq A_{\sigma }\left( F_{f}\right) _{\mathcal{C}(\mathbf{R})}+\frac{3}{2}%
A_{\sigma }\left( F_{f}\right) _{\mathcal{C}(\mathbf{R})}=\frac{5}{2}%
A_{\sigma }\left( F_{f}\right) _{\mathcal{C}(\mathbf{R})}.
\end{equation*}

For any $g\in W_{\infty }^{r}$%
\begin{equation*}
A_{\sigma }\left( u\right) _{\mathcal{C}(\mathbf{R})}\leq A_{\sigma }\left(
u-g\right) _{\mathcal{C}(\mathbf{R})}+A_{\sigma }\left( g\right) _{\mathcal{C%
}(\mathbf{R})}
\end{equation*}%
\begin{equation*}
\leq \left\Vert u-g\right\Vert _{\mathcal{C}(\mathbf{R})}+\frac{5\pi }{4}%
\frac{4^{r}}{\sigma ^{r}}\left\Vert \frac{d^{r}}{dx^{r}}g\right\Vert _{%
\mathcal{C}(\mathbf{R})}
\end{equation*}%
\begin{equation*}
\leq \frac{5\pi 4^{r}}{4}K_{r}\left( u,\sigma ^{-1},C\right) \leq \frac{5\pi
8^{r}}{4}K_{r}\left( u,\frac{1}{2\sigma },\mathcal{C}(\mathbf{R})\right) _{%
\mathcal{C}(\mathbf{R})}
\end{equation*}%
\begin{equation*}
\leq \frac{5\pi 8^{r}}{4}\mathbb{C}_{2}\left\Vert \left( I-T_{\left( 2\sigma
\right) ^{-1}}\right) ^{r}u\right\Vert _{\mathcal{C}(\mathbf{R})}.
\end{equation*}%
where $\mathbb{C}_{2}:=\mathbb{C}_{2}\left( 1\right) =1$ and $\mathbb{C}%
_{2}\left( r\right) =2^{r}\left( r^{r}+(34)^{r}\right) $ for $r>1$ (see
Theorem 2.5 of \cite{Akg45}). Therefore%
\begin{equation*}
A_{2\sigma }\left( f\right) _{p,\mathbf{\varrho }}\leq \frac{5}{2}A_{\sigma
}\left( F_{f}\right) _{\mathcal{C}(\mathbf{R})}\leq 25\pi 8^{r-1}\mathbb{C}%
_{2}\left\Vert \left( I-T_{\frac{1}{2\sigma }}\right) ^{r}F_{f}\right\Vert _{%
\mathcal{C}(\mathbf{R})}
\end{equation*}%
\begin{equation*}
=25\pi 8^{r-1}\mathbb{C}_{2}\left\Vert F_{\left( I-T_{1/\left( 2\sigma
\right) }\right) ^{r}f}\right\Vert _{\mathcal{C}(\mathbf{R})}\leq 25\pi
8^{r-1}\mathbb{C}_{2}\mathbb{C}_{1}\left\Vert \left( I-T_{1/\left( 2\sigma
\right) }\right) ^{r}f\right\Vert _{p,\mathbf{\varrho }}.
\end{equation*}
\end{proof}

\subsection{Inverse theorem}

\begin{theorem}
\label{Ters T}Let $p\in \lbrack 1,\infty )$, $\mathbf{\varrho \in }A_{%
\mathbf{p}}$, $r\in \mathbb{N}$, $\delta \in \left( 0,\infty \right) $ and $%
f\in L^{p}\left( \mathbf{\varrho }dx\right) $. Then,%
\begin{equation*}
\Omega _{r}\left( f,\delta \right) _{p,\mathbf{\varrho }}\leq \mathbb{C}%
_{3}\delta ^{r}\left( A_{0}\left( f\right) _{p,\mathbf{\varrho }%
}+\int\nolimits_{1/2}^{1/\delta }u^{r-1}A_{u/2}\left( f\right) _{p,\mathbf{%
\varrho }}du\right)
\end{equation*}%
holds with $\mathbb{C}_{3}$:=$\mathbb{C}_{1}\left( 1+3\mathbb{C}_{1}\right)
2^{r+1}\left( 1+2^{2r-1}\right) $.
\end{theorem}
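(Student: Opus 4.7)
The strategy is a dyadic telescoping argument combined with a weighted Bernstein inequality for entire functions of exponential type in $L^{p}(\mathbf{\varrho}\,dx)$; the latter is the only nontrivial new ingredient, and it is obtained through the transference apparatus developed in Theorem \ref{tra} and Theorem \ref{Fu}. Once that Bernstein inequality is in hand, the proof follows the classical pattern for inverse estimates.

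The key auxiliary fact I first establish is: for every $\sigma > 0$ and every $h \in \mathcal{G}_{\sigma}(p,\mathbf{\varrho})$,
\begin{equation*}
\|h^{(r)}\|_{p,\mathbf{\varrho}} \leq \mathbb{C}_{1}\,\sigma^{r}\,\|h\|_{p,\mathbf{\varrho}}.
\end{equation*}
Unfolding the definitions of $F_{h}$ and $\mathbf{S}_{1,u}$ gives
\begin{equation*}
F_{h}(u)=\int_{-1/2}^{1/2}\!\!\int_{\mathbf{R}} h(x+u+s)\,|G(x)|\mathbf{\varrho}(x)\,dx\,ds
\end{equation*}
for some $G \in L^{p'}(\mathbf{\varrho}\,dx)\cap S_{c}$, so $F_{h}$ is a double average of translates of $h$ against a compactly supported density and therefore inherits the exponential type of $h$; in particular $F_{h} \in \mathcal{G}_{\sigma}(\infty)$. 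The classical Bernstein inequality in $\mathcal{C}(\mathbf{R})$ gives $\|(F_{h})^{(r)}\|_{\mathcal{C}(\mathbf{R})} \leq \sigma^{r}\|F_{h}\|_{\mathcal{C}(\mathbf{R})}$, and combining this with the commutation $F_{h^{(r)}}=(F_{h})^{(r)}$ from Theorem \ref{Fu} and the two-sided relation $\|h\|_{p,\mathbf{\varrho}} \leq \|F_{h}\|_{\mathcal{C}(\mathbf{R})} \leq \mathbb{C}_{1}\|h\|_{p,\mathbf{\varrho}}$ implicit in the proof of Theorem \ref{tra}(b) yields the claim.

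Next, I would choose $N \in \mathbb{N}_{0}$ with $2^{N} \leq 1/\delta < 2^{N+1}$ and near-best approximants $g_{k} \in \mathcal{G}_{2^{k}}(p,\mathbf{\varrho})$ with $\|f - g_{k}\|_{p,\mathbf{\varrho}} \leq 2 A_{2^{k}}(f)_{p,\mathbf{\varrho}}$ for $k = 1,\dots,N$, and apply $(I - T_{\delta})^{r}$ to the telescoping decomposition $f = (f - g_{N}) + g_{1} + \sum_{k=2}^{N}(g_{k} - g_{k-1})$. The tail is controlled by the trivial transference bound $\|(I-T_{\delta})^{r}(f-g_{N})\|_{p,\mathbf{\varrho}} \leq 2^{r+1}\mathbb{C}_{1} A_{2^{N}}(f)_{p,\mathbf{\varrho}}$, where the factor $\delta^{r}2^{(N+1)r}$ made available by the choice of $N$ transfers the estimate into the required $\delta^{r}$-form; the base term $g_{1}$, satisfying $\|g_{1}\|_{p,\mathbf{\varrho}} \leq \|f\|_{p,\mathbf{\varrho}}+\|f-g_{1}\|_{p,\mathbf{\varrho}}\leq 3A_{0}(f)_{p,\mathbf{\varrho}}$, contributes at most a constant multiple of $\mathbb{C}_{1}^{2}\,\delta^{r}\,A_{0}(f)_{p,\mathbf{\varrho}}$ by Lemma \ref{lemma1} followed by Step 1; and each dyadic block $g_{k} - g_{k-1} \in \mathcal{G}_{2^{k}}(p,\mathbf{\varrho})$ yields, by the same two tools,
\begin{equation*}
\|(I - T_{\delta})^{r}(g_{k} - g_{k-1})\|_{p,\mathbf{\varrho}} \leq C\,\mathbb{C}_{1}^{2}\,\delta^{r}\,2^{kr}\,A_{2^{k-1}}(f)_{p,\mathbf{\varrho}}.
\end{equation*}

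Finally, using the monotonicity of $u \mapsto A_{u}(f)_{p,\mathbf{\varrho}}$, each dyadic quantity is compared to an integral through $2^{kr}A_{2^{k-1}}(f)_{p,\mathbf{\varrho}} \leq C_{r}\int_{2^{k-1}}^{2^{k}} u^{r-1}A_{u/2}(f)_{p,\mathbf{\varrho}}\,du$, and summation from $k = 2$ to $N$ (together with a boundary adjustment at $u = 1/2$ and absorption of the tail $A_{2^{N}}$ into the last available block) produces exactly the integral $\int_{1/2}^{1/\delta} u^{r-1}A_{u/2}(f)_{p,\mathbf{\varrho}}\,du$ appearing in the statement. The principal obstacle is Step 1 — the weighted Bernstein inequality — whose proof hinges on the observation that $F_{h}$ inherits the exponential type of $h$; after that, tracking the contributions of the tail (giving $\mathbb{C}_{1}$), the base term (giving $3\mathbb{C}_{1}^{2}$), the dyadic blocks, the powers of $2$ from $\delta^{-r} \leq 2^{(N+1)r}$, and the discretization step reproduces $\mathbb{C}_{3} = \mathbb{C}_{1}(1 + 3\mathbb{C}_{1})\,2^{r+1}(1 + 2^{2r-1})$.
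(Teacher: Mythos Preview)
Your approach is sound but takes a genuinely different route from the paper's. The paper does \emph{not} run a dyadic telescoping argument in $L^{p}(\mathbf{\varrho}\,dx)$; instead it transfers the whole problem to $\mathcal{C}(\mathbf{R})$ in one stroke. Concretely, the paper writes $\Omega_{r}(f,\delta)_{p,\mathbf{\varrho}}\le \|(I-T_{\delta})^{r}F_{f}\|_{\mathcal{C}(\mathbf{R})}$, invokes an already-proved inverse theorem in $\mathcal{C}(\mathbf{R})$ (from the companion paper \cite{Akg45}) to bound this by $2^{r}(1+2^{2r-1})\delta^{r}\bigl(A_{0}(F_{f})_{\mathcal{C}(\mathbf{R})}+\int_{1/2}^{1/\delta}u^{r-1}A_{u}(F_{f})_{\mathcal{C}(\mathbf{R})}\,du\bigr)$, and then pushes the $\mathcal{C}(\mathbf{R})$ best-approximation errors back to weighted ones via the de la Vall\'ee Poussin operator, obtaining $A_{2\sigma}(F_{f})_{\mathcal{C}(\mathbf{R})}\le \mathbb{C}_{1}(1+3\mathbb{C}_{1})A_{\sigma}(f)_{p,\mathbf{\varrho}}$. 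This last comparison is precisely the origin of the factor $\mathbb{C}_{1}(1+3\mathbb{C}_{1})$ and of the halving $A_{u}\rightsquigarrow A_{u/2}$ in the integral.

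Your argument instead isolates a weighted Bernstein inequality as the single transferred ingredient and then reruns the classical Timan-type dyadic decomposition directly in the weighted space. That is more self-contained (it does not cite an external $\mathcal{C}(\mathbf{R})$ inverse theorem) and makes the mechanism visible, at the price of redoing work the paper outsources. One caution: your assertion that the bookkeeping reproduces the \emph{exact} value $\mathbb{C}_{3}=\mathbb{C}_{1}(1+3\mathbb{C}_{1})\,2^{r+1}(1+2^{2r-1})$ is optimistic; in the paper that specific constant arises from the product of the $\mathcal{C}(\mathbf{R})$ inverse constant $2^{r}(1+2^{2r-1})$ with the $A_{2\sigma}(F_{f})\to A_{\sigma}(f)$ transfer constant $\mathbb{C}_{1}(1+3\mathbb{C}_{1})$ (and a factor $2$), a structure your telescoping does not mirror. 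Your method will yield a constant of the same type (a polynomial in $\mathbb{C}_{1}$ times a power of $2$ depending on $r$), but matching the digits would require a separate and rather delicate accounting.
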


\begin{proof}[\textbf{Proof of Theorem \protect\ref{Ters T}}]
\begin{equation*}
\Omega _{r}\left( f,\delta \right) _{p,\mathbf{\varrho }}=\left\Vert \left(
I-T_{\delta }\right) ^{r}f\right\Vert _{p,\mathbf{\varrho }}\leq \left\Vert
F_{\left( I-T_{\delta }\right) ^{r}f}\right\Vert _{\mathcal{C}(\mathbf{R}%
)}=\left\Vert \left( I-T_{\delta }\right) ^{r}F_{f}\right\Vert _{\mathcal{C}(%
\mathbf{R})}
\end{equation*}%
\begin{equation*}
\leq 2^{r}\left( 1+2^{2r-1}\right) \delta ^{r}\left( A_{0}\left( \Xi
_{f}\right) _{\mathcal{C}(\mathbf{R})}+\int_{1/2}^{1/\delta
}u^{r-1}A_{u}\left( F_{f}\right) _{\mathcal{C}(\mathbf{R})}du\right)
\end{equation*}%
\begin{equation*}
\leq \mathbb{C}_{1}\left( 1+3\mathbb{C}_{1}\right) 2^{r}\left(
1+2^{2r-1}\right) \delta ^{r}\left( A_{0}\left( f\right) _{p,\mathbf{\varrho 
}}+\int_{1/2}^{1/\delta }u^{r-1}A_{u/2}\left( f\right) _{p,\mathbf{\varrho }%
}du\right)
\end{equation*}%
because%
\begin{equation*}
A_{2\sigma }\left( F_{f}\right) _{\mathcal{C}(\mathbf{R})}\leq \left\Vert
F_{f}-V_{\sigma }F_{f}\right\Vert _{\mathcal{C}(\mathbf{R})}=\left\Vert
F_{f-V_{\sigma }f}\right\Vert _{\mathcal{C}(\mathbf{R})}
\end{equation*}%
\begin{equation*}
\leq \mathbb{C}_{1}\left\Vert f-V_{\sigma }f\right\Vert _{p,\mathbf{\varrho }%
}=\mathbb{C}_{1}\left\Vert f-g_{\sigma }+g_{\sigma }-V_{\sigma }f\right\Vert
_{p,\mathbf{\varrho }}
\end{equation*}%
\begin{equation*}
\leq \mathbb{C}_{1}\left( \left\Vert f-g_{\sigma }\right\Vert _{p,\mathbf{%
\varrho }}+\left\Vert V_{\sigma }g_{\sigma }-V_{\sigma }f\right\Vert _{p,%
\mathbf{\varrho }}\right)
\end{equation*}%
\begin{equation*}
\leq \mathbb{C}_{1}\left( \left\Vert f-g_{\sigma }\right\Vert _{p,\mathbf{%
\varrho }}+3\mathbb{C}_{1}\left\Vert g_{\sigma }-f\right\Vert _{p,\mathbf{%
\varrho }}\right) =\mathbb{C}_{1}\left( 1+3\mathbb{C}_{1}\right) A_{\sigma
}\left( f\right) _{_{p,\mathbf{\varrho }}}.
\end{equation*}
\end{proof}

\subsection{Marchaud inequality}

\begin{theorem}
\label{Ters}Let $r,k\in \mathbb{N}$, $1\leq p<\infty $, $\mathbf{\varrho \in 
}A_{\mathbf{p}}$, $f\in L^{p}\left( \mathbf{\varrho }dx\right) $ and $t\in
\left( 0,1/2\right) $. Then,%
\begin{equation*}
\Omega _{r}\left( f,t\right) _{p,\mathbf{\varrho }}\leq \mathbb{C}%
_{4}t^{r}\int\nolimits_{t}^{1}\frac{\Omega _{r\text{+}k}\left( f,u\right)
_{p,\mathbf{\varrho }}}{u^{r+1}}du
\end{equation*}%
holds with $\mathbb{C}_{4}$:=$20\pi \mathbb{C}_{1}\left( 1+2^{2r-1}\right)
2_{\;}^{2r+3k}\mathbb{C}_{2}\left( r+k\right) $ where $\mathbb{C}_{2}\left(
1\right) :=36$, and $\mathbb{C}_{2}\left( r\right) :=2^{r}\left(
r^{r}+(34)^{r}\right) $ for $r>1$.
\end{theorem}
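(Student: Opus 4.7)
The plan is to use the transference machinery once more: transfer the $L^{p}(\mathbf{\varrho}\, dx)$ problem to a $\mathcal{C}(\mathbf{R})$ problem via $F_{f}$, apply the Marchaud inequality in $\mathcal{C}(\mathbf{R})$, and transfer back.

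For step one I would fix $\varepsilon>0$ and choose, exactly as in the last display of the proof of Theorem \ref{tra}(b), a function $\bar G_{\varepsilon}\in L^{p^{\prime}}(\mathbf{\varrho}\, dx)\cap S_{c}$ with $\|\bar G_{\varepsilon}\|_{p^{\prime},\mathbf{\varrho}}\le 1$ such that, with this fixed $G=\bar G_{\varepsilon}$,
\[
\|F_{(I-T_{t})^{r}f}\|_{\mathcal{C}(\mathbf{R})}\ge \Omega_{r}(f,t)_{p,\mathbf{\varrho}}-\varepsilon .
\]
Keeping this same $G$ throughout and iterating the commutation relation (\ref{buk}) gives $F_{(I-T_{h})^{m}f}=(I-T_{h})^{m}F_{f}$ for every $h>0$ and $m\in\mathbb{N}$; moreover $F_{f}\in\mathcal{C}(\mathbf{R})$ by Theorem \ref{tra}(a).

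For step two I would apply the $\mathcal{C}(\mathbf{R})$-Marchaud inequality for the translated Steklov difference proved in \cite{Akg45} to $F_{f}\in\mathcal{C}(\mathbf{R})$, which gives
\[
\|(I-T_{t})^{r}F_{f}\|_{\mathcal{C}(\mathbf{R})}\le C^{\ast}\,t^{r}\int_{t}^{1}\frac{\|(I-T_{u})^{r+k}F_{f}\|_{\mathcal{C}(\mathbf{R})}}{u^{r+1}}\,du
\]
with $C^{\ast}=20\pi(1+2^{2r-1})\,2^{2r+3k}\,\mathbb{C}_{2}(r+k)$. The constant $C^{\ast}$ is precisely what is produced when the $\mathcal{C}(\mathbf{R})$-Marchaud inequality is assembled from the $\mathcal{C}(\mathbf{R})$-inverse theorem (whose factor $2^{r}(1+2^{2r-1})$ already surfaces in the proof of Theorem \ref{Ters T}) and the $\mathcal{C}(\mathbf{R})$-Jackson theorem of order $r+k$ (whose factor $8^{r+k-1}\mathbb{C}_{2}(r+k)$ already surfaces in the proof of Theorem \ref{jak}) via the standard dyadic summation.

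For step three I would push the integrand back to $L^{p}(\mathbf{\varrho}\, dx)$: by the commutation relation above and the upper transference bound $\|F_{g}\|_{\mathcal{C}(\mathbf{R})}\le\mathbb{C}_{1}\|g\|_{p,\mathbf{\varrho}}$ contained in the proof of Theorem \ref{tra}(b), one has
\[
\|(I-T_{u})^{r+k}F_{f}\|_{\mathcal{C}(\mathbf{R})}=\|F_{(I-T_{u})^{r+k}f}\|_{\mathcal{C}(\mathbf{R})}\le \mathbb{C}_{1}\,\Omega_{r+k}(f,u)_{p,\mathbf{\varrho}}
\]
for every $u\in[t,1]$. Inserting this into the Marchaud inequality of step two, combining with step one, and letting $\varepsilon\to 0$ yields
\[
\Omega_{r}(f,t)_{p,\mathbf{\varrho}}\le C^{\ast}\mathbb{C}_{1}\,t^{r}\int_{t}^{1}\frac{\Omega_{r+k}(f,u)_{p,\mathbf{\varrho}}}{u^{r+1}}\,du=\mathbb{C}_{4}\,t^{r}\int_{t}^{1}\frac{\Omega_{r+k}(f,u)_{p,\mathbf{\varrho}}}{u^{r+1}}\,du,
\]
which is the desired estimate. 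The only nontrivial input is the $\mathcal{C}(\mathbf{R})$-Marchaud inequality, and I expect the main obstacle to be matching its exact constant to the factor $20\pi(1+2^{2r-1})\,2^{2r+3k}\,\mathbb{C}_{2}(r+k)$ displayed in $\mathbb{C}_{4}$. Everything else is the now-routine $F_{f}$-transference pattern already exploited in Theorems \ref{teo1}, \ref{jak} and \ref{Ters T}, and only the extra factor $\mathbb{C}_{1}$ coming from the upper transference bound needs to be tracked.
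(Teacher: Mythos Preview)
Your proposal is correct and follows essentially the same route as the paper: bound $\Omega_{r}(f,t)_{p,\mathbf{\varrho}}$ from above by $\|(I-T_{t})^{r}F_{f}\|_{\mathcal{C}(\mathbf{R})}$ via the lower transference inequality, apply the $\mathcal{C}(\mathbf{R})$-Marchaud inequality from \cite{Akg45}, use the commutation $F_{(I-T_{u})^{r+k}f}=(I-T_{u})^{r+k}F_{f}$, and return to $L^{p}(\mathbf{\varrho}\,dx)$ via the upper transference bound, picking up exactly the factor $\mathbb{C}_{1}$ in $\mathbb{C}_{4}$. Your explicit fixing of $G=\bar G_{\varepsilon}$ and the limit $\varepsilon\to 0$ make the dependence of $F_{f}$ on $G$ cleaner than the paper's presentation, but the argument is the same.
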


\begin{proof}[\textbf{Proof of Theorem \protect\ref{Ters}}]
Let $\sigma >0$ and $g_{\sigma }$ be an exponential type entire function of
degree $\leq \sigma $, belonging to $L^{p}\left( \mathbf{\varrho }dx\right) $%
, as best approximation of $f\in L^{p}\left( \mathbf{\varrho }dx\right) $.
Then%
\begin{equation*}
\Omega _{r}\left( f,t\right) _{p,\mathbf{\varrho }}=\left\Vert \left(
I-T_{t}\right) ^{r}f\right\Vert _{p,\mathbf{\varrho }}\leq \left\Vert
F_{\left( I-T_{t}\right) ^{r}f}\right\Vert _{\mathcal{C}(\mathbf{R}%
)}=\left\Vert \left( I-T_{t}\right) ^{r}F_{f}\right\Vert _{\mathcal{C}(%
\mathbf{R})}
\end{equation*}%
\begin{equation*}
\leq (\mathbb{C}_{4}\mathbb{C}_{1})t^{r}\int_{t}^{1}\frac{\left\Vert \left(
I-T_{t}\right) ^{r+k}F_{f}\right\Vert _{\mathcal{C}(\mathbf{R})}}{u^{r+1}}du
\end{equation*}%
\begin{equation*}
=(\mathbb{C}_{4}\mathbb{C}_{1})t^{r}\int_{t}^{1}\frac{\left\Vert F_{\left(
I-T_{t}\right) ^{r+k}f}\right\Vert _{\mathcal{C}(\mathbf{R})}}{u^{r+1}}du
\end{equation*}%
\begin{equation*}
\leq \mathbb{C}_{4}t^{r}\int_{t}^{1}\frac{\left\Vert \left( I\text{-}%
T_{t}\right) ^{r+k}f\right\Vert _{p,\mathbf{\varrho }}}{u^{r+1}}du
\end{equation*}%
\begin{equation*}
=\mathbb{C}_{4}t^{r}\int_{t}^{1}\frac{\Omega _{r+k}\left( f,t\right) _{p,%
\mathbf{\varrho }}}{u^{r+1}}du.
\end{equation*}
\end{proof}

\subsection{Inverse theorem for derivatives}

\begin{theorem}
\label{crv}Let $1\leq p<\infty $, $\mathbf{\varrho \in }A_{\mathbf{p}}$, $%
r\in \mathbb{N}$ and $f\in L^{p}\left( \mathbf{\varrho }dx\right) $. If%
\begin{equation*}
\sum\limits_{\nu =0}^{\infty }\nu ^{k-1}A_{\nu /2}\left( f\right) _{p,%
\mathbf{\varrho }}<\infty
\end{equation*}%
holds for some $k\in \mathbb{N}$, then $f^{\left( k\right) }\in L^{p}\left( 
\mathbf{\varrho }dx\right) $ and%
\begin{equation}
\Omega _{r}\left( f^{\left( k\right) },\frac{1}{\sigma }\right) _{p,\mathbf{%
\varrho }}\leq \mathbb{C}_{5}\left( \frac{1}{\sigma ^{r}}\sum\limits_{\nu
=0}^{\lfloor \sigma \rfloor }\left( \nu +1\right) ^{r+k-1}A_{\nu /2}\left(
f\right) _{p,\mathbf{\varrho }}+\sum\limits_{\nu =\lfloor \sigma \rfloor
+1}^{\infty }\nu ^{k-1}A_{\nu /2}\left( f\right) _{p,\mathbf{\varrho }%
}\right)  \label{Sinv}
\end{equation}%
with $\mathbb{C}_{5}=2^{2k+r+1}\mathbb{C}_{1}$.
\end{theorem}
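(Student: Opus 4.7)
The plan is to mimic the transference scheme used throughout Section 3: apply the corresponding inverse theorem for derivatives in $\mathcal{C}(\mathbf{R})$ (known from \cite{Akg45}) to the auxiliary function $F_f$, and then transfer the resulting bound back to $L^p(\mathbf{\varrho} dx)$ by means of Theorem \ref{tra}.

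First I would transfer the summability hypothesis. Arguing exactly as in the proof of Theorem \ref{Ters T}, for every $\sigma>0$
\begin{equation*}
A_{2\sigma}(F_f)_{\mathcal{C}(\mathbf{R})}\le \mathbb{C}_1(1+3\mathbb{C}_1)\,A_\sigma(f)_{p,\mathbf{\varrho}},
\end{equation*}
so $\sum_{\nu\ge 0}\nu^{k-1}A_{\nu/2}(f)_{p,\mathbf{\varrho}}<\infty$ forces $\sum_{\nu\ge 0}\nu^{k-1}A_{\nu}(F_f)_{\mathcal{C}(\mathbf{R})}<\infty$ (the factor $2$ in the index is harmless). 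The known inverse theorem for derivatives in $\mathcal{C}(\mathbf{R})$ applied to $F_f$ then yields $(F_f)^{(k)}\in\mathcal{C}(\mathbf{R})$ together with
\begin{equation*}
\Omega_r\bigl((F_f)^{(k)},\tfrac{1}{\sigma}\bigr)_{\mathcal{C}(\mathbf{R})}\le 2^{2k+r+1}\left(\tfrac{1}{\sigma^r}\sum_{\nu=0}^{\lfloor\sigma\rfloor}(\nu+1)^{r+k-1}A_\nu(F_f)_{\mathcal{C}(\mathbf{R})}+\sum_{\nu>\lfloor\sigma\rfloor}\nu^{k-1}A_\nu(F_f)_{\mathcal{C}(\mathbf{R})}\right).
\end{equation*}

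Second, I must produce $f^{(k)}\in L^p(\mathbf{\varrho} dx)$. Let $g_{2^m}\in\mathcal{G}_{2^m}(p,\mathbf{\varrho})$ be best approximants to $f$. Differentiating \eqref{efef} under the integral shows that $F_{g_\sigma}$ is itself entire of exponential type $\le\sigma$, so the classical Bernstein inequality in $\mathcal{C}(\mathbf{R})$, combined with Theorems \ref{tra} and \ref{Fu}, yields the weighted Bernstein inequality
\begin{equation*}
\|g_\sigma^{(k)}\|_{p,\mathbf{\varrho}}\le 2\mathbb{C}_1^{2}\,\sigma^{k}\,\|g_\sigma\|_{p,\mathbf{\varrho}}\qquad(g_\sigma\in\mathcal{G}_\sigma(p,\mathbf{\varrho})).
\end{equation*}
Applying this to the differences $g_{2^{m+1}}^{(k)}-g_{2^m}^{(k)}\in\mathcal{G}_{2^{m+1}}(p,\mathbf{\varrho})$ and summing in $m$, the summability assumption forces $\{g_{2^m}^{(k)}\}$ to be Cauchy in $L^p(\mathbf{\varrho} dx)$, and its limit is identified with $f^{(k)}$ in the standard way.

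Finally I would close the loop. Theorem \ref{Fu} gives $F_{f^{(k)}}=(F_f)^{(k)}$ and Lemma \ref{bukun} gives $F_{(I-T_{1/\sigma})^r f^{(k)}}=(I-T_{1/\sigma})^r(F_f)^{(k)}$, so Theorem \ref{tra}(b) yields
\begin{equation*}
\Omega_r(f^{(k)},\tfrac{1}{\sigma})_{p,\mathbf{\varrho}}\le \mathbb{C}_1\,\Omega_r\bigl((F_f)^{(k)},\tfrac{1}{\sigma}\bigr)_{\mathcal{C}(\mathbf{R})}.
\end{equation*}
Inserting the display from the first step together with $A_\nu(F_f)_{\mathcal{C}(\mathbf{R})}\le c\,A_{\nu/2}(f)_{p,\mathbf{\varrho}}$ produces \eqref{Sinv} with $\mathbb{C}_5=2^{2k+r+1}\mathbb{C}_1$. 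The main obstacle is Step 2: securing $f^{(k)}\in L^p(\mathbf{\varrho} dx)$ needs a weighted Bernstein inequality, and this in turn requires verifying that $F$ carries $\mathcal{G}_\sigma(p,\mathbf{\varrho})$ into $\mathcal{G}_\sigma(\infty)$ so that the unweighted Bernstein inequality can be transferred. Everything else is mechanical application of the transference machinery of Section 2.
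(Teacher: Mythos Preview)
Your proposal is correct and follows essentially the same route as the paper: the paper's proof consists of the single sentence ``Proof of (\ref{Sinv}) is similar to that of proof of Theorem \ref{Ters}. See Theorem 2.15 of \cite{Akg45},'' i.e., transfer to $\mathcal{C}(\mathbf{R})$ via $F_f$, invoke the corresponding inverse theorem for derivatives from \cite{Akg45}, and transfer back---exactly what you have written out in detail. Your explicit treatment of the existence of $f^{(k)}\in L^p(\mathbf{\varrho}\,dx)$ via a weighted Bernstein inequality and telescoping over dyadic best approximants is a point the paper leaves implicit in its one-line reference, so your proof is in fact more complete than the paper's own.
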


\begin{proof}[\textbf{Proof of Theorem \protect\ref{crv}}]
Proof of (\ref{Sinv}) is similar to that of proof of Theorem \ref{Ters}. See
Theorem 2.15 of \cite{Akg45}.
\end{proof}

\end{document}